\documentclass[11pt]{article}

\usepackage[utf8]{inputenc}
\usepackage[T1]{fontenc}
\usepackage[english]{babel}

\usepackage[a4paper,margin=1in]{geometry}
\usepackage{setspace}

\usepackage{amsmath,amssymb,amsfonts,amsthm,mathtools}
\usepackage{bm}
\usepackage{mathrsfs}

\usepackage{graphicx}
\usepackage{booktabs}
\usepackage{array}
\usepackage{enumitem}
\usepackage{caption}
\usepackage{subcaption}

\usepackage[numbers,sort&compress]{natbib}
\usepackage[colorlinks=true,
linkcolor=blue,
citecolor=blue,
urlcolor=blue]{hyperref}
\usepackage[nameinlink,capitalize]{cleveref}

\theoremstyle{plain}
\newtheorem{theorem}{Theorem}[section]
\newtheorem{proposition}[theorem]{Proposition}

\newtheorem{corollary}[theorem]{Corollary}

\theoremstyle{definition}
\newtheorem{definition}{Definition}
\newtheorem{example}{Example}

\theoremstyle{remark}
\newtheorem{remark}{Remark}

\newcommand{\R}{\mathbb{R}}

\newcommand{\D}{\mathbb{D}}
\newcommand{\Dpos}{\mathbb{D}^{+}}

\newcommand{\kh}{\mathbf{k}}

\newcommand{\eone}{\mathbf{e}_{1}}
\newcommand{\etwo}{\mathbf{e}_{2}}

\newcommand{\preceqD}{\preceq_{\mathbb{D}}}

\newcommand{\Dint}[2]{[#1,#2]_{\D}}

\newcommand{\Dnorm}[1]{\left\lVert #1\right\rVert_{\D}}
\newcommand{\Ddist}{d_{\D}}

\newcommand{\PD}{\mathcal{P}_{\mathbb{D}}}

\newcommand{\maj}{\succcurlyeq_{\mathrm{maj}}}
\newcommand{\Dmaj}{\succcurlyeq_{\mathbb{D},\mathrm{maj}}}

\newcommand{\Neg}{\mathcal{N}}

\newcommand{\Yager}{\mathcal{Y}}

\newcommand{\HD}{H_{\mathbb{D}}}
\newcommand{\GiniD}{G_{\mathbb{D}}}

\title{\textbf{On the Negation of a Hyperbolic-Valued Probability Distribution}}

\author{
	Juan Bory-Reyes,
	Edil D. Molina-Fernandez,
	Jos\'e M. Sigarreta-Almira
}

\date{}

\begin{document}
	
	\maketitle
	
	\begin{abstract}
In the context of hyperbolic numbers we define the concept of negation of finite hyperbolic-valued probability distributions that is based on the partial order induced by the idempotent structure of hyperbolic numbers. Then, a hyperbolic majorization and general hyperbolic negators are introduced. For a broad class of generated negators, we prove that the original distribution majorizes its negation. This comparison yields that entropy increase for the strong hyperbolic Shannon entropy and the hyperbolic Gini-Simpson entropy, and it implies component-wise uniformization of the iterated negation. Finally, we analyze involutive property of hyperbolic negators and prove that are structurally distinct from the generated negators responsible for the entropy increase. These results show that hyperbolic probabilistic negation is not merely a component-wise copy of the real case, but a theory governed by the interaction between idempotent decomposition, partial order, and entropy measure.
	\end{abstract}
	
	\noindent\textbf{Keywords:} Hyperbolic numbers; probability distribution; negation; entropy measure.
	
	\noindent\textbf{Mathematic Subject Classification 2020:} 30G35; 94A17; 60A05; 06F25.

	\section{Introduction}
Hyperbolic numbers, also known as split-complex, double, Lorentz, or perplex numbers, constitute one of the simplest two-dimensional commutative extensions of the real line. Their origin goes back to the Cockle's work on tessarines in the nineteenth century \cite{Cockle1848}. Later developments connected them with the theory of functions of a hyperbolic complex variable, the geometry of the hyperbolic number plane, hypercomplex function theory, and Lorentzian models of special relativity \cite{Vignaux1935,Fjelstad1986,Sobczyk1995,Kisil2012,Sobczyk2013}. From an algebraic point of view, hyperbolic numbers can be considered as a hybrid between real and complex numbers. Their idempotent structure induce a natural component-wise partial order, which is of great value to discuss positivity, comparison, monotonicity, and order-reversing transformations in the hyperbolic setting. These features make the hyperbolic framework especially suitable for extending probabilistic and information-theoretic notions while preserving an intrinsic order structure.
	
	The axiomatic basis of probabilistic use of hyperbolic numbers has recently gained increasing attention. In \cite{Alpay2017} the authors introduced probability measures with values in the hyperbolic algebra as a direct analogue of Kolmogorov's axioms. Their construction shows that the basic rules of probability theory can be recovered in this setting, including conditional probability, the multiplication theorem, the law of total probability and Bayes' theorem. The idempotent decomposition of hyperbolic numbers yields a component-wise representation of the hyperbolic probability by two real probabilistic components, with the additional possibility that one component is inactive. This makes the hyperbolic probability simplex richer than the classical one, while preserving an intrinsic partial order suitable for comparison arguments. Subsequent works have used this framework to define hyperbolic extensions of Shannon entropy and related uncertainty measures, including weak and strong hyperbolic entropy, chaos-game constructions and Rényi-type entropy extensions \cite{TellezSanchez2021}. More recently \cite{Alpay2025}, probability measures with values in scaled hyperbolic numbers have also been developed, showing that this line of research still vivid.
	
	The systematic study of negations of finite probability distributions is traced to Yager's maximum-entropy negation, originally motivated by the problem of representing linguistic expressions such as ``not high'' when the positive concept is modeled by a probability distribution \cite{Yager2015}. The central idea is to redistribute the probability attached to an event among the remaining alternatives, so that the resulting distribution represents an opposite or negative informational state. This construction has been interpreted as an uncertainty-increasing transformation and has motivated several subsequent studies on the structure of probability negators. Early developments studied additional properties of Yager's transformation, including joint and marginal distributions, symmetry, convexity, bias, Markov-chain interpretations, information generation, and information loss \cite{Srivastava2017,Srivastava2021,Kaur2022,Klein2022}. In particular, later works introduced point-wise negators, distinguished between probability-distribution-independent and probability-distribution-dependent negators, characterized linear negators as combinations of Yager's negator and the uniform negator, and studied fixed points, contraction, involutivity, and convergence toward the uniform distribution \cite{Batyrshin2021,Batyrshin2021a,Batyrshin2021b,Batyrshin2023,EnsasteguiOrtega2024}.
	
	In subsequent research, a lot of attention has been drawn to analyze the uncertainty produced by negation through information-theoretic quantities, including Shannon-type measures, divergences, Tsallis entropy and related measures \cite{Srivastava2019,Zhang2019,Liu2021,Kaur2022}. Other contributions consider biased, exponential, generalized, or preference-based re-distributions of probability mass, as well as applications in sensor fusion, target recognition, decision making, and the estimation of distributions associated with negated properties \cite{Gao2019,Sun2020,Pham2021,Wu2022,Tanwar2023,Tanwar2023a}. Extensions to evidence theory have also been proposed through negations of basic probability assignments and belief structures \cite{Yin2019,Luo2020,Deng2020,Liu2023}. Recently, Xiao proposed a maximum-entropy negation for complex-valued distributions by introducing a complex-valued distribution model and an associated entropy functional \cite{Xiao2021}. These developments show that negation is a useful tool for representing uncertainty from an opposite perspective. However, the approach followed are not extended to the case of hyperbolic-valued probability distributions, where probabilities are governed by an intrinsic partial order and may have one or two active idempotent components. This leaves a natural question open about the intersection of negation, order, and hyperbolic-valued uncertainty.
	
	Motivated by this gap, the main goal of this paper is to develop a theory of negation for finite hyperbolic-valued probability distributions. The first step is to introduce a notion of hyperbolic majorization, which provides an intrinsic way to compare dispersion in the hyperbolic probability simplex. The second, is to define general hyperbolic negators as mass-preserving transformations that reverse the hyperbolic order whenever such an order comparison is available. These two notions allow us to formulate the central question of the paper in structural terms: under which conditions a hyperbolic-valued distribution is more concentrated than its negation? 
	
	 In this work, we provide a positive answer for this question by showing a broad class of generated hyperbolic negators, where such conditions are shown. We identify conditions on the generator that guarantee the required majorization comparison between a distribution and its negation. Once this comparison is established, the entropy results follow naturally from Schur-concavity. In particular, the strong hyperbolic Shannon entropy and the hyperbolic Gini--Simpson entropy increase under the corresponding negation, and the iterates are component-wise uniformized.
	
	This approach also clarifies why the hyperbolic framework is not a formal duplication of the real-valued theory. The idempotent decomposition makes component-wise arguments possible, but the presence of zero divisors, inactive components, and a partial order changes the meaning of negation. A hyperbolic negator must preserve the active mass structure and reverse order only where the order is meaningful. The  phenomena described do not appear in the classical setting, such as nonuniform fixed points for general negators and a structural distinction between entropy-increasing generated negators and involutive negators.
	
	The plan of the paper is as follows. Section~\ref{sec:hyperbolic-probability} recalls the basic facts on hyperbolic numbers and introduces finite hyperbolic-valued probability distributions. Section~\ref{sec:majorization-entropy} develops hyperbolic majorization and the entropy functionals used throughout the paper. Section~\ref{sec-hyperbolic-negators} introduces hyperbolic negators and studies structural properties, including generated negators and involutivity. Section~\ref{sec:entropy-convergence} proves the main comparison results and derives entropy increase and component-wise uniformization.

	\section{Hyperbolic-valued probability distributions}
	\label{sec:hyperbolic-probability}
	
	In this section we have complied some basic concepts, notations and geometric framework used throughout the paper. The idempotent representation of hyperbolic numbers allows us to treat a hyperbolic-valued probability distribution as a pair of real distributions, possibly with one inactive component. This component-wise viewpoint is essential and will be used later to define majorization, entropy measure, and negation. 
	
	\subsection{Hyperbolic numbers}
	
	We will touch only a few aspect of hyperbolic numbers. For more details on the algebraic and geometric structure we refer the reader to \cite{Sobczyk1995,Kisil2012,Vignaux1935, Sobczyk2013}.
	
	The set of hyperbolic numbers is the 2-dimensional associative, commutative real algebra spanned by $1$ and a distinct unit $\kh$ where $\kh^2=1$.
	\[
	\D=\{a+\kh b\mid a,b\in\R,\ \kh^2=1\}
	\]
	This algebra is not a field, since it contains non-trivial zero divisors. 
	
	The standard idempotent basis is given by
	\[
	\eone=\frac{1+\kh}{2},
	\qquad
	\etwo=\frac{1-\kh}{2}.
	\]
	These elements satisfy $\eone^2=\eone$, $\etwo^2=\etwo$, $\eone\etwo=0$, and $\eone+\etwo=1$.
	Therefore,
	\[
	\D=\R\eone+\R\etwo.
	\]
	Every $z=a+\kh b\in\D$ has a unique idempotent representation
	\[
	z=z^{(1)}\eone+z^{(2)}\etwo,
	\]
	where $z^{(1)}=a+b$, and $z^{(2)}=a-b$.
	Conversely,
	\[
	z^{(1)}\eone+z^{(2)}\etwo
	=
	\frac{z^{(1)}+z^{(2)}}{2}
	+
	\kh\frac{z^{(1)}-z^{(2)}}{2}.
	\]
	Addition and multiplication are component-wise in the idempotent basis
	\[
	(z^{(1)}\eone+z^{(2)}\etwo)
	+
	(w^{(1)}\eone+w^{(2)}\etwo)
	=
	(z^{(1)}+w^{(1)})\eone
	+
	(z^{(2)}+w^{(2)})\etwo,
	\]
	and
	\[
	(z^{(1)}\eone+z^{(2)}\etwo)
	(w^{(1)}\eone+w^{(2)}\etwo)
	=
	z^{(1)}w^{(1)}\eone
	+
	z^{(2)}w^{(2)}\etwo.
	\]
	
	Since $\eone\etwo=0$, the two idempotent axes contain all non-trivial
	zero divisors. More precisely, a non-zero element
	$z=z^{(1)}\eone+z^{(2)}\etwo$ is a zero divisor if and only if
	$z^{(1)}=0$ or $z^{(2)}=0$. We shall denote the corresponding null
	cone by $\mathfrak{G}=\R\eone\cup\R\etwo$.
	
	The hyperbolic numbers are endowed with the partial order
	\[
	z\preceqD w
	\quad\Longleftrightarrow\quad
	z^{(1)}\le w^{(1)}
	\ \text{and}\
	z^{(2)}\le w^{(2)}.
	\]
	In the basis $\{1,\kh\}$, write $z=a+\kh b$ and $w=c+\kh d$. Since
	\[
	a+\kh b=(a+b)\eone+(a-b)\etwo, \quad \text{and} \quad c+\kh d = (c+d)\eone + (c-d)\etwo.
	\]
	The preceding order can be equivalently written as
	\[
	z\preceqD w
	\quad\Longleftrightarrow\quad
	a+b\le c+d
	\ \text{and}\
	a-b\le c-d.
	\]
	By the cone of non-negative hyperbolic numbers we mean 
	\[
	\Dpos
	=
	\{z\in\D\mid 0\preceqD z\}
	=
	\{a+b\kh\in\D\mid a\ge |b|\}.
	\]
	In physics, this is the so-called "null cone" or "light cone".
	
	For $z,w\in\D$, with $z\preceqD w$, the hyperbolic interval from
	$z$ to $w$ is given by
	\[
	\Dint{z}{w}
	=
	\{\xi\in\D\mid z\preceqD \xi\preceqD w\}.
	\]
	In particular,
	\[
	\Dint{0}{1}
	=
	\{z^{(1)}\eone+z^{(2)}\etwo\mid
	0\le z^{(1)}\le 1,\ 0\le z^{(2)}\le 1\}.
	\]
	
	\begin{remark}
		The order $\preceqD$ is not total. For instance, $\eone$ and
		$\etwo$ are not comparable. Indeed,
		\[
		\eone=1\eone+0\etwo,
		\qquad
		\etwo=0\eone+1\etwo.
		\]
		The first component of $\eone$ is larger than the first
		component of $\etwo$. On the other hand, the second component of
		$\eone$ is smaller than the second component of $\etwo$. Therefore
		neither $\eone\preceqD\etwo$ nor $\etwo\preceqD\eone$ holds.
	\end{remark}
	
	\medskip
	
	We shall use the real-valued hyperbolic modulus induced by the
	quadratic form of the hyperbolic plane. For a hyperbolic number
	$z=a+\kh b$, we define
	\[
	|z|_\D
	=
	\sqrt{|a^2-b^2|}.
	\]
	In idempotent coordinates, this can be written as
	\[
	|z|_\D^2
	=|a^2-b^2| = |(a+b)(a-b)| =
	|z^{(1)}z^{(2)}|.
	\]
	This quantity is not a norm in the usual sense, since it vanishes on
	nonzero zero divisors. Nevertheless, it captures the intrinsic
	hyperbolic quadratic structure and will be used as a real-valued
	measure of separation.
	
	\subsection{Hyperbolic-valued probability distributions}
	
	Let $n \ge 2$ and let $\Omega=\{\omega_1,\ldots,\omega_n\}$ be a
	finite frame. For an $n$-tuple $P=(p_1,\ldots,p_n)\in\D^n$, we write
	\[
	p_j=p_j^{(1)}\eone+p_j^{(2)}\etwo,
	\qquad j=1,\ldots,n,
	\]
	and denote
	\[
	P^{(1)}=(p_1^{(1)},\ldots,p_n^{(1)}),
	\qquad
	P^{(2)}=(p_1^{(2)},\ldots,p_n^{(2)}).
	\]
	
	Using the scalar hyperbolic modulus component-wise, we define the associated
	quantity for hyperbolic-valued vectors by
	\[
	\Dnorm{P}
	=
	\left(
	\sum_{j=1}^{n}|p_j|_\D^2
	\right)^{1/2}
	=
	\sqrt{\sum_{j=1}^{n}
		\left|p_j^{(1)}p_j^{(2)}\right|}.
	\]
	For two hyperbolic-valued vectors $P,Q\in\D^n$, we write
	\[
	\Ddist(P,Q)=\Dnorm{P-Q}.
	\]
	This quantity is a hyperbolic separation rather than a metric. Indeed,
	$\Ddist(P,Q)=0$ if and only if, for every $j=1,\ldots,n$,
	\[
	p_j^{(1)}=q_j^{(1)}
	\quad\text{or}\quad
	p_j^{(2)}=q_j^{(2)}.
	\]
	Thus $\Ddist$ detects simultaneous variation in both idempotent components at the
	same coordinate. We shall use it as the real-valued separation induced by the
	quadratic form of $\D$.
	\medskip
	
	Following the finite version of the hyperbolic-valued probability framework introduced in \cite{Alpay2017}, the following definition extends the usual normalization of a probability distribution to the hyperbolic setting.
	
	\begin{definition}
		\label{def:hyperbolic-probability-distribution}
		An $n$-tuple $P=(p_1,\ldots,p_n)\in\D^n$ is called a hyperbolic-valued probability distribution on $\Omega$ if
		\[
		p_j\in\Dint{0}{1}
		\quad\text{for every } j=1,\ldots,n,
		\]
		and
		\[
		\sum_{j=1}^{n}p_j\in\{1,\eone,\etwo\}.
		\]
		The set of all hyperbolic-valued probability distributions on
		$\Omega$ will be denoted by $\PD(n)$. For $P\in\PD(n)$, its total
		mass is denoted by
		\[
		s(P)=\sum_{j=1}^{n}p_j
		=
		s^{(1)}(P)\eone+s^{(2)}(P)\etwo,
		\qquad
		s^{(\ell)}(P)=\sum_{j=1}^{n}p_j^{(\ell)},\quad \ell=1,2.
		\]
	\end{definition}
	
	The condition $s(P)\in\{1,\eone,\etwo\}$ gives three possible
	normalizations:
	\[
	\begin{array}{lcl}
		s(P)=1
		&\Longleftrightarrow&
		s^{(1)}(P)=1
		\ \text{and}\
		s^{(2)}(P)=1,\\[4pt]
		s(P)=\eone
		&\Longleftrightarrow&
		s^{(1)}(P)=1
		\ \text{and}\
		s^{(2)}(P)=0,\\[4pt]
		s(P)=\etwo
		&\Longleftrightarrow&
		s^{(1)}(P)=0
		\ \text{and}\
		s^{(2)}(P)=1.
	\end{array}
	\]
	We shall say that the $\ell$-th idempotent component of $P$ is
	active if $s^{(\ell)}(P)=1$, and inactive if $s^{(\ell)}(P)=0$.
	Equivalently, an active component is a real probability distribution, whereas
	an inactive component is identically zero.
	
	Thus, if $s(P)=1$, both idempotent components are active and $P$
	is equivalent to a pair of real probability distributions. If
	$s(P)=\eone$, only the first component is active, so $P\in(\R\eone)^n$;
	if $s(P)=\etwo$, only the second component is active, so
	$P\in(\R\etwo)^n$.
	
	This normalization can also be expressed in the basis $\{1,\kh\}$. If $p_j=a_j+\kh b_j$, then
	$p_j\in\Dint{0}{1}$ is equivalent to
	$0\le a_j+b_j\le1$ and $0\le a_j-b_j\le1$. Moreover,
	\[
	\begin{array}{lcl}
		s(P)=1
		&\Longleftrightarrow&
		\displaystyle \sum_{j=1}^{n}a_j=1
		\ \text{and}\
		\displaystyle \sum_{j=1}^{n}b_j=0,\\[6pt]
		s(P)=\eone
		&\Longleftrightarrow&
		\displaystyle \sum_{j=1}^{n}a_j=\frac12
		\ \text{and}\
		\displaystyle \sum_{j=1}^{n}b_j=\frac12,\\[6pt]
		s(P)=\etwo
		&\Longleftrightarrow&
		\displaystyle \sum_{j=1}^{n}a_j=\frac12
		\ \text{and}\
		\displaystyle \sum_{j=1}^{n}b_j=-\frac12.
	\end{array}
	\]
	
	The geometry of $\PD(n)$ is transparent in idempotent coordinates.
	Indeed, if
	\[
	\Delta_n=\left\{x=(x_1,\ldots,x_n)\in\R_{ \ge 0}^n\mid
	\sum_{j=1}^{n}x_j=1\right\},
	\]
	then the elements of $\PD(n)$ with total mass $s(P)=1$ are naturally
	identified with $\Delta_n\times\Delta_n$, since both idempotent
	components are active. On the other hand, the elements with
	$s(P)=\eone$ are identified with $\Delta_n$ through their first
	idempotent component, while the elements with $s(P)=\etwo$ are
	identified with another copy of $\Delta_n$ through their second
	idempotent component. These three components are disjoint in the 
	space $\D^n$, because they correspond to different total masses. More precisely,	$\PD(n)$ has the intrinsic decomposition
	\[
	\PD(n)
	=
	\bigsqcup_{s\in\{1,\eone,\etwo\}}
	\{P\in\PD(n)\mid s(P)=s\},
	\]
	where $\sqcup$ denotes disjoint union, and
	\[
	\begin{aligned}
		&\{P\in\PD(n)\mid s(P)=1\}\simeq \Delta_n \times \Delta_n, \\
		&\{P\in\PD(n)\mid s(P)=\eone\}\simeq \Delta_n, \\
		&\{P\in\PD(n)\mid s(P)=\etwo\}\simeq \Delta_n .
	\end{aligned}
	\]
	
	Figure~\ref{fig:PD2-geometry} illustrates this decomposition for
	$n=2$. Since $P=(p_1,p_2)$ is determined by $p_1$ once the total
	mass $s(P)$ is fixed, the figure represents the first entry
	$p_1=a+\kh b$ in the $(a,b)$-plane. The three panels are kept
	separate because they correspond to disjoint subsets of $\D^2$, even
	though their projections onto the $p_1$-plane may overlap.
	
	\begin{figure}[ht]
		\centering
		\includegraphics[width=\textwidth]{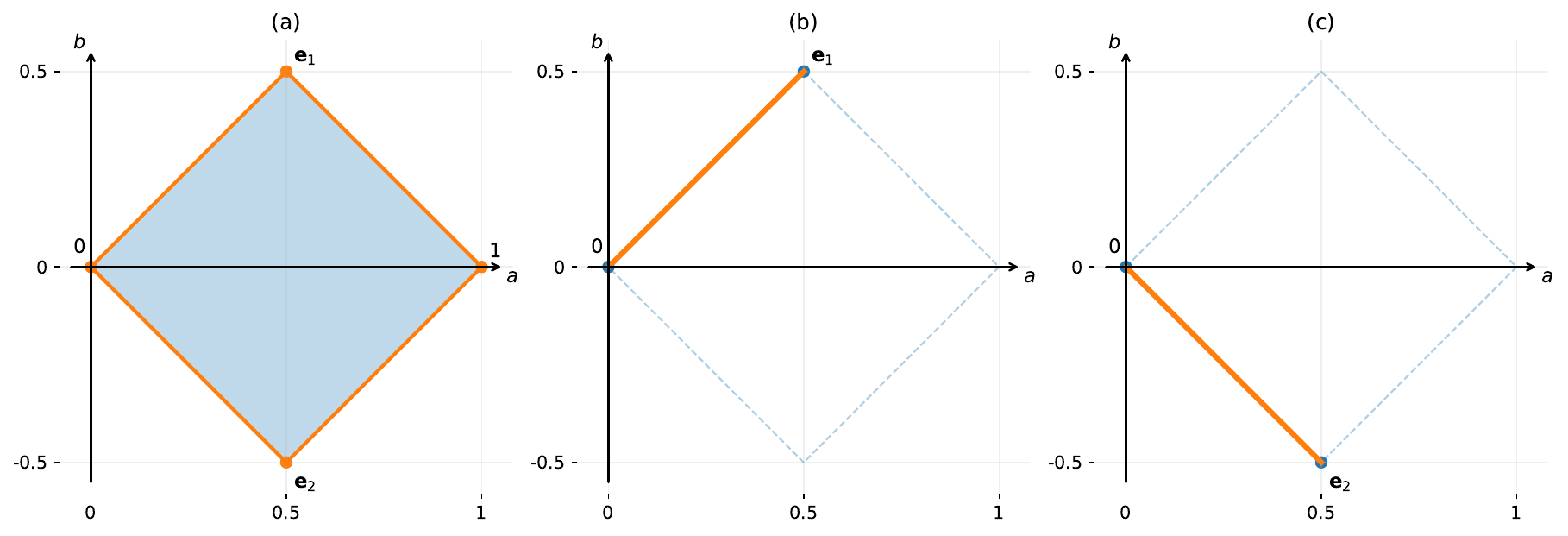}
		\caption{Intrinsic visualization of $\PD(2)$ in the
			$(a,b)$-plane through the first entry $p_1=a+\kh b$. Panel (a) corresponds to $s(P)=1$, where
			$p_1\in\Dint{0}{1}$ and the admissible region is the diamond with
			vertices $0,\eone,1,\etwo$. Panel (b) corresponds to
			$s(P)=\eone$, where $p_1$ lies on the segment joining $0$ and
			$\eone$. Panel (c) corresponds to $s(P)=\etwo$, where $p_1$
			lies on the segment joining $0$ and $\etwo$.}
		\label{fig:PD2-geometry}
	\end{figure}
	
	As in the classical simplex, the uniform distribution plays the role of the maximally spread distribution. In the hyperbolic setting, however, the uniform distribution must be defined relative to the total hyperbolic mass of the distribution.
	
	\begin{definition}
		\label{def:uniform-hyperbolic-distribution}
		For $s\in\{1,\eone,\etwo\}$, the uniform hyperbolic-valued
		probability distribution of length $n$ and total mass $s$ is
		\[
		U_n^s=
		\left(
		\frac{s}{n},\ldots,\frac{s}{n}
		\right)
		=		
		\left(
		\frac{s^{(1)}}{n},\ldots,\frac{s^{(1)}}{n}
		\right)\eone
		+
		\left(
		\frac{s^{(2)}}{n},\ldots,\frac{s^{(2)}}{n}
		\right)\etwo.
		\]
		In particular, the uniform distribution associated with
		$P\in\PD(n)$ is $U_n^{s(P)}$. Thus the three canonical uniform
		distributions in $\PD(n)$ are $U_n^1$, $U_n^{\eone}$, and
		$U_n^{\etwo}$.
	\end{definition}
	
	At the opposite extreme from the uniform distribution are the distributions concentrated at a single point in each active idempotent component. These are the hyperbolic analogues of the vertices of the classical probability simplex.
	
	Let $\delta_j\in\R^n$ denote the $j$-th point distribution on
	$\Omega$, that is,
	\[
	(\delta_j)_m=
	\begin{cases}
		1, & m=j,\\
		0, & m\neq j.
	\end{cases}
	\]
	
	\begin{definition}
		\label{def:hyperbolic-vertex-distribution}
		We say that $P\in\PD(n)$ is a hyperbolic vertex
		distribution if each active idempotent component is a real point
		distribution. Equivalently, for suitable
		$\alpha,\beta\in\{1,\ldots,n\}$,
		\[
		P=
		\begin{cases}
			\delta_{\alpha}\eone+\delta_{\beta}\etwo,
			& \text{if } s(P)=1,\\
			\delta_{\alpha}\eone,
			& \text{if } s(P)=\eone,\\
			\delta_{\beta}\etwo,
			& \text{if } s(P)=\etwo.
		\end{cases}
		\]
	\end{definition}
	
	The hyperbolic modulus captures specific information rather than measuring the size of each idempotent
	component separately, it measures how the two components interact. The next result shows how this interaction is bounded and identifies the extremal configurations.
	
	\begin{proposition}
		\label{prop:norm-bounds-hyperbolic-distributions}
		Let $n\ge 2$. If $P\in\PD(n)$, then
		\[
		0
		\le
		\Dnorm{P}^2
		\le
		s^{(1)}(P)s^{(2)}(P).
		\]
		In particular, $\Dnorm{P}=0$ for every $P$ with $s(P)\in\{\eone, \etwo\}$. If $s(P)=1$, the lower bound is attained if and only if the supports of
		$P^{(1)}$ and $P^{(2)}$ are disjoint. The upper bound is attained if and
		only if $P=\delta_\alpha\eone+\delta_\alpha\etwo$ for some $\alpha\in\{1,\ldots,n\}$.
	\end{proposition}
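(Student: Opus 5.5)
Since $p_j\in\Dint{0}{1}$, both idempotent components satisfy $p_j^{(1)},p_j^{(2)}\ge0$. The absolute values in the definition of $\Dnorm{\cdot}$ can therefore be dropped, giving
\[
\Dnorm{P}^2=\sum_{j=1}^n p_j^{(1)}p_j^{(2)}.
\]
The lower bound $0\le\Dnorm{P}^2$ is then immediate, as a sum of nonnegative terms.

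For the upper bound we use $0\le p_j^{(2)}\le s^{(2)}(P)$ for every $j$, which holds because all summands of $s^{(2)}(P)$ are nonnegative. Multiplying by $p_j^{(1)}\ge0$ and summing over $j$ gives
\[
\sum_j p_j^{(1)}p_j^{(2)}\le s^{(2)}(P)\sum_j p_j^{(1)}=s^{(1)}(P)s^{(2)}(P).
\]
If $s(P)\in\{\eone,\etwo\}$, one component is inactive, so the right-hand side is $0$. Hence $\Dnorm{P}=0$, which is the ``in particular'' claim.

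Now assume $s(P)=1$ and consider the equality cases.
\begin{itemize}[leftmargin=*]
\item \emph{Lower bound.} Since every term $p_j^{(1)}p_j^{(2)}$ is nonnegative, the sum vanishes if and only if each product vanishes. That is, for no $j$ are both $p_j^{(1)}$ and $p_j^{(2)}$ positive, which says exactly that the supports of $P^{(1)}$ and $P^{(2)}$ are disjoint.
\item \emph{Upper bound.} The bound becomes $\sum_j p_j^{(1)}p_j^{(2)}\le1$. Equality forces $p_j^{(1)}\bigl(1-p_j^{(2)}\bigr)=0$ for all $j$, because each such term is nonnegative and they sum to the gap. So on the support of $P^{(1)}$ we have $p_j^{(2)}=1$. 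Since $P^{(2)}$ is a probability vector, $p_j^{(2)}=1$ can hold for at most one index $\alpha$, so the support of $P^{(1)}$ is contained in $\{\alpha\}$. As $P^{(1)}$ sums to $1$, this gives $P^{(1)}=\delta_\alpha$. Then $p_\alpha^{(2)}=1$ gives $P^{(2)}=\delta_\alpha$. The converse is a direct check: $\Dnorm{\delta_\alpha\eone+\delta_\alpha\etwo}^2=1$.
\end{itemize}

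The upper-bound characterization is stated without the hypothesis $s(P)=1$, so the degenerate cases need a remark. When $s(P)\in\{\eone,\etwo\}$, the upper bound $0$ is always attained. The distribution $\delta_\alpha\eone+\delta_\alpha\etwo$ has total mass $1$, so it is not of this form. The characterization should therefore be read under the assumption $s(P)=1$, in parallel with the lower-bound statement, and I would point this out. Apart from this interpretive point, the only real step is the equality analysis of the upper bound, which reduces to the support argument above.
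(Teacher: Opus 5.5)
Your proof is correct and follows the paper's argument essentially step for step: you drop the absolute values, bound $p_j^{(2)}$ to obtain the upper bound, and characterize the equality cases through the vanishing of $p_j^{(1)}p_j^{(2)}$ and of $p_j^{(1)}(1-p_j^{(2)})$. Your single estimate $p_j^{(2)}\le s^{(2)}(P)$ covers all three mass strata at once, where the paper splits into cases. Your remark that the upper-bound characterization must be read under $s(P)=1$ is valid, and it matches how the paper's proof treats that case.
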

	
	\begin{proof}
		By definition of the hyperbolic modulus,
		\[
		\Dnorm{P}^2
		=
		\sum_{j=1}^{n}
		p_j^{(1)}p_j^{(2)}.
		\]
		The entries are non-negative, so the lower bound follows immediately.
		
		If one component is inactive, then either $p_j^{(1)}=0$ for every $j$ or
		$p_j^{(2)}=0$ for every $j$. Hence $\Dnorm{P}=0$ in both cases.
		
		It remains to consider the case $s(P)=1$. Then $P^{(1)}$ and $P^{(2)}$
		are real probability distributions. Therefore
		\[
		\sum_{j=1}^{n}
		p_j^{(1)}p_j^{(2)}
		\le
		\sum_{j=1}^{n}p_j^{(1)}
		=
		1.
		\]
		This proves the upper bound. The lower bound is attained precisely when
		$p_j^{(1)}p_j^{(2)}=0$ for every $j$, which is equivalent to the supports
		of $P^{(1)}$ and $P^{(2)}$ being disjoint.
		
		For the upper bound to be attained, we must have $p_j^{(2)}=1$
		whenever $p_j^{(1)}>0$. Since $P^{(2)}$ is a probability distribution,
		this forces $P^{(2)}=\delta_\alpha$ for some $\alpha$. The equality
		condition then forces $P^{(1)}=\delta_\alpha$ as well. The converse is immediate.
	\end{proof}
	
	We next compare a distribution with the uniform element through the hyperbolic separation. The estimate below expresses this comparison in terms of the intrinsic quadratic modulus and the total mass.
	
	\begin{proposition}
		\label{prop:distance-to-uniform}
		Let $n\ge 2$. If $P\in\PD(n)$, then
		\[
		\left|
		\Dnorm{P}^2
		-
		\frac{s^{(1)}(P)s^{(2)}(P)}{n}
		\right|
		\le
		\Ddist(P,U_n^{s(P)})^2
		\le
		\Dnorm{P}^2
		+
		3\frac{s^{(1)}(P)s^{(2)}(P)}{n}.
		\]
	\end{proposition}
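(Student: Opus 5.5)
The plan is to work entirely in idempotent coordinates and reduce both inequalities to elementary estimates on the products that define $\Ddist$. Write $s^{(\ell)}=s^{(\ell)}(P)$ for $\ell=1,2$. The $j$-th entry of $P-U_n^{s(P)}$ has idempotent components
\[
x_j=p_j^{(1)}-\frac{s^{(1)}}{n},\qquad y_j=p_j^{(2)}-\frac{s^{(2)}}{n}.
\]
By definition, $\Ddist(P,U_n^{s(P)})^2=\sum_{j=1}^n |x_jy_j|$. Everything therefore hinges on the expansion
\[
x_jy_j=p_j^{(1)}p_j^{(2)}-\frac{s^{(2)}}{n}p_j^{(1)}-\frac{s^{(1)}}{n}p_j^{(2)}+\frac{s^{(1)}s^{(2)}}{n^2}.
\]

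For the lower bound, I will sum this identity over $j$ without absolute values. Use $\sum_j p_j^{(1)}p_j^{(2)}=\Dnorm{P}^2$, which holds because the entries are non-negative, as in the proof of Proposition~\ref{prop:norm-bounds-hyperbolic-distributions}. Also use $\sum_j p_j^{(\ell)}=s^{(\ell)}$. The sum then telescopes:
\[
\sum_{j=1}^n x_jy_j=\Dnorm{P}^2-\frac{s^{(1)}s^{(2)}}{n}-\frac{s^{(1)}s^{(2)}}{n}+\frac{s^{(1)}s^{(2)}}{n}=\Dnorm{P}^2-\frac{s^{(1)}s^{(2)}}{n}.
\]
The triangle inequality $\bigl|\sum_j x_jy_j\bigr|\le\sum_j|x_jy_j|$ then gives the left-hand inequality directly.

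For the upper bound, I will apply the triangle inequality termwise to the four summands of $x_jy_j$. All the quantities $p_j^{(\ell)}$ and $s^{(\ell)}$ are non-negative, so
\[
|x_jy_j|\le p_j^{(1)}p_j^{(2)}+\frac{s^{(2)}}{n}p_j^{(1)}+\frac{s^{(1)}}{n}p_j^{(2)}+\frac{s^{(1)}s^{(2)}}{n^2}.
\]
Summing over $j$ gives $\Dnorm{P}^2+\frac{s^{(1)}s^{(2)}}{n}+\frac{s^{(1)}s^{(2)}}{n}+\frac{s^{(1)}s^{(2)}}{n}$, which is exactly the claimed right-hand side.

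No case distinction on $s(P)$ is needed. If a component is inactive, then $s^{(1)}s^{(2)}=0$ and one of the vectors $x$ or $y$ vanishes identically, so both bounds reduce to $0\le 0\le 0$, consistent with Proposition~\ref{prop:norm-bounds-hyperbolic-distributions}. The only point requiring care is the sign bookkeeping in the telescoping sum, so that the $-2+1$ coefficients of $s^{(1)}s^{(2)}/n$ combine correctly. Apart from that, the argument is routine.
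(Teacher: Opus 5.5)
Your proof is correct and follows essentially the same route as the paper. You expand each product $x_jy_j$, sum it without absolute values so that the triangle inequality gives the lower bound, and bound the four expanded terms separately using non-negativity to get the upper bound.
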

	
	\begin{proof}
		For the lower bound, we have
		\[
		\begin{aligned}
			\Ddist(P,U_n^{s(P)})^2
			&=
			\sum_{j=1}^{n}
			\left|
			\left(
			p_j^{(1)}-\frac{s^{(1)}(P)}{n}
			\right)
			\left(
			p_j^{(2)}-\frac{s^{(2)}(P)}{n}
			\right)
			\right|\\
			&\ge
			\left|
			\sum_{j=1}^{n}
			\left(
			p_j^{(1)}-\frac{s^{(1)}(P)}{n}
			\right)
			\left(
			p_j^{(2)}-\frac{s^{(2)}(P)}{n}
			\right)
			\right|\\
			&=
			\left|
			\sum_{j=1}^{n}p_j^{(1)}p_j^{(2)}
			-
			\frac{s^{(1)}(P)}{n}\sum_{j=1}^{n}p_j^{(2)}
			-
			\frac{s^{(2)}(P)}{n}\sum_{j=1}^{n}p_j^{(1)}
			+
			\frac{s^{(1)}(P)s^{(2)}(P)}{n}
			\right|\\
			&=
			\left|
			\Dnorm{P}^2
			-
			\frac{s^{(1)}(P)s^{(2)}(P)}{n}
			-
			\frac{s^{(1)}(P)s^{(2)}(P)}{n}
			+
			\frac{s^{(1)}(P)s^{(2)}(P)}{n}
			\right|\\
			&=
			\left|
			\Dnorm{P}^2
			-
			\frac{s^{(1)}(P)s^{(2)}(P)}{n}
			\right|.
		\end{aligned}
		\]
		
		For the upper bound, we have
		\[
		\begin{aligned}
			\Ddist(P,U_n^{s(P)})^2
			&=
			\sum_{j=1}^{n}
			\left|
			p_j^{(1)}p_j^{(2)}
			-
			\frac{s^{(1)}(P)}{n}p_j^{(2)}
			-
			\frac{s^{(2)}(P)}{n}p_j^{(1)}
			+
			\frac{s^{(1)}(P)s^{(2)}(P)}{n^2}
			\right|\\
			&\le
			\sum_{j=1}^{n}p_j^{(1)}p_j^{(2)}
			+
			\frac{s^{(1)}(P)}{n}
			\sum_{j=1}^{n}p_j^{(2)}
			+
			\frac{s^{(2)}(P)}{n}
			\sum_{j=1}^{n}p_j^{(1)}
			+
			\sum_{j=1}^{n}
			\frac{s^{(1)}(P)s^{(2)}(P)}{n^2}\\
			&=
			\Dnorm{P}^2
			+
			\frac{s^{(1)}(P)s^{(2)}(P)}{n}
			+
			\frac{s^{(1)}(P)s^{(2)}(P)}{n}
			+
			\frac{s^{(1)}(P)s^{(2)}(P)}{n}\\
			&=
			\Dnorm{P}^2
			+
			\frac{3s^{(1)}(P)s^{(2)}(P)}{n}.
		\end{aligned}
		\]
	\end{proof}
	
	\section{Majorization order and hyperbolic entropy}
	\label{sec:majorization-entropy}
	
	Let us now introduce an order-theoretic tool used to compare the dispersion of two hyperbolic-valued probability distributions. Since hyperbolic probabilities decompose into idempotent components, majorization will be imposed separately on each active component. This allows classical Schur-concavity arguments to be transferred to the hyperbolic setting.
	
	Let $n\ge 2$, $x=(x_1,\ldots,x_n)\in\R^n$, and let
	$x^\downarrow=(x_1^\downarrow,\ldots,x_n^\downarrow)$ denote its
	decreasing rearrangement, so that
	$x_1^\downarrow \ge \cdots \ge x_n^\downarrow$.
	
	We use the standard notion of majorization for real vectors, as developed in the classical theory of inequalities and majorization \cite{Hardy1934, Marshall2011}. If
	$x,y\in\R^n$ are non-negative vectors, we say
	that $x$ majorizes $y$, and write $x\maj y$, if, for $k=1,\ldots,n-1$,
	\[
	\sum_{j=1}^{k}x_j^\downarrow
	\ge 
	\sum_{j=1}^{k}y_j^\downarrow,
	\qquad
	\text{and}
	\qquad
	\sum_{j=1}^{n}x_j=\sum_{j=1}^{n}y_j.
	\]
	
	The following definition is the natural component-wise extension of classical majorization. The requirement that the total hyperbolic masses coincide ensures that the same idempotent components are active in both distributions, so that each comparison takes place between real vectors with the same total mass.

	\begin{definition}
		\label{def:hyperbolic-majorization}
		Let $n\ge 2$, and $P,Q\in\PD(n)$ with $s(P)=s(Q)$. We say that $P$
		hyperbolically majorizes $Q$, and write $P\Dmaj Q$, if
		\[
		P^{(1)}\maj Q^{(1)}
		\quad\text{and}\quad
		P^{(2)}\maj Q^{(2)}.
		\]
	\end{definition}
	
	\begin{remark}
		The decreasing rearrangement is taken separately in each idempotent
		component. This is natural in the hyperbolic setting, since
		$\preceqD$ is only a partial order.
	\end{remark}
	
	The relation $\Dmaj$ inherits the basic preorder properties of real
	majorization in each idempotent component. Thus, for
	$P,Q,R\in\PD(n)$ with the same total mass, the following properties
	hold:
	\begin{enumerate}[label=\textup{(\roman*)}]
		\item $P\Dmaj P$.
		
		\item If $P\Dmaj Q$ and $Q\Dmaj R$, then $P\Dmaj R$.
		
		\item $P\Dmaj Q$ and $Q\Dmaj P$ if and only if $P^{(1)}$ is a
		permutation of $Q^{(1)}$, and $P^{(2)}$ is a permutation of
		$Q^{(2)}$.
	\end{enumerate}
	In particular, $\Dmaj$ becomes a partial order after identifying
	distributions whose idempotent components differ only by permutations.
	
	The basic geometry of majorization is preserved component-wise. Hyperbolic vertex distributions are the most concentrated elements, while the uniform distribution is the least concentrated element.
	
	\begin{proposition}
		\label{prop:vertices-and-uniform-majorization}
		Let $n\ge 2$, and $P\in\PD(n)$. If $V\in\PD(n)$ is a hyperbolic vertex
		distribution with $s(V)=s(P)$, then
		\[
		V\Dmaj P\Dmaj U_n^{s(P)}.
		\]
	\end{proposition}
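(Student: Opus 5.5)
The plan is to reduce everything to the classical real statement: every probability vector $x\in\Delta_n$ satisfies $\delta_\alpha\maj x\maj (1/n,\ldots,1/n)$. This reduction will be applied separately in each idempotent component, and the inactive components will be handled trivially. By \cref{def:hyperbolic-majorization}, $V\Dmaj P$ means $V^{(\ell)}\maj P^{(\ell)}$ for $\ell=1,2$, and $s(V)=s(P)=s(U_n^{s(P)})$ by construction. So it suffices to verify the two real majorizations in each component $\ell$.

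First, the component-wise bookkeeping. Fix $\ell\in\{1,2\}$.

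If the $\ell$-th component is inactive, i.e.\ $s^{(\ell)}(P)=0$, then $P^{(\ell)}=0$. Since all three distributions share the total mass, also $V^{(\ell)}=0$ and $(U_n^{s(P)})^{(\ell)}=0$. The relation $0\maj 0$ holds trivially, because all partial sums vanish.

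If the component is active, then $P^{(\ell)}\in\Delta_n$. By \cref{def:hyperbolic-vertex-distribution}, $V^{(\ell)}=\delta_\gamma$ for some $\gamma$ (namely $\alpha$ or $\beta$). By \cref{def:uniform-hyperbolic-distribution}, $(U_n^{s(P)})^{(\ell)}=(1/n,\ldots,1/n)$.

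Second, the real inequalities for $x=P^{(\ell)}\in\Delta_n$ and $k=1,\ldots,n-1$.

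For the vertex side, $\sum_{j\le k}(\delta_\gamma)^\downarrow_j=1\ge\sum_{j\le k}x^\downarrow_j$, since the entries are non-negative and sum to $1$.

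For the uniform side, I need $\sum_{j\le k}x^\downarrow_j\ge k/n$. The argument is an averaging one. The average of the $k$ largest entries is at least the average of all $n$ entries, which is $1/n$. To see this, write $A_k=\frac1k\sum_{j\le k}x^\downarrow_j$. The remaining entries are each at most $x^\downarrow_k\le A_k$. Hence $1=kA_k+\sum_{j>k}x^\downarrow_j\le kA_k+(n-k)A_k=nA_k$.

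Equality of total sums holds in both comparisons, since all three vectors sum to $1$.

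There is no real obstacle here. The only point needing care is to state the equal-mass condition explicitly, so that inactive components coincide as zero vectors, and to note that the decreasing rearrangement is taken per component, as in the remark after \cref{def:hyperbolic-majorization}. Combining the two components then yields $V\Dmaj P\Dmaj U_n^{s(P)}$.
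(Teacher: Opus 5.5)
Your proposal is correct and follows the same route as the paper's proof. You fix an idempotent component and dispose of the inactive case, where all three vectors vanish. In an active component you compare the partial sums of $\delta_\gamma^\downarrow=(1,0,\ldots,0)$ and of $(1/n,\ldots,1/n)$ with those of the decreasing rearrangement of $P^{(\ell)}$. Your bound $1\le kA_k+(n-k)A_k$ justifies the averaging step, which the paper only asserts.
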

	
	\begin{proof}
		Fix $\ell\in\{1,2\}$. If $s^{(\ell)}(P)=0$, then
		$V^{(\ell)}=P^{(\ell)}=(U_n^{s(P)})^{(\ell)}=0$, and the
		majorization relations are trivial in this component.
		
		Suppose now that $s^{(\ell)}(P)=1$. Then $P^{(\ell)}$ is a real
		probability distribution, $V^{(\ell)}=\delta_\alpha$ for some
		$\alpha\in\{1,\ldots,n\}$, and
		$(U_n^{s(P)})^{(\ell)}=(1/n,\ldots,1/n)$. Since
		$\delta_\alpha^\downarrow=(1,0,\ldots,0)$, for every
		$k=1,\ldots,n-1$,
		\[
		\sum_{j=1}^{k}(\delta_\alpha^\downarrow)_j
		=
		1
		\ge 
		\sum_{j=1}^{k}(P^{(\ell)})^\downarrow_j.
		\]
		Thus $\delta_\alpha\maj P^{(\ell)}$.
		
		On the other hand, because $(P^{(\ell)})^\downarrow$ is decreasing
		and has total mass $1$, the average of its first $k$ entries is at
		least the average of all its entries. Hence
		\[
		\frac1k
		\sum_{j=1}^{k}(P^{(\ell)})^\downarrow_j
		\ge 
		\frac1n,
		\qquad k=1,\ldots,n-1,
		\]
		or equivalently
		\[
		\sum_{j=1}^{k}(P^{(\ell)})^\downarrow_j
		\ge 
		\frac{k}{n}.
		\]
		Therefore $P^{(\ell)}\maj (1/n,\ldots,1/n)$. Applying these
		relations to both idempotent components gives
		$V\Dmaj P\Dmaj U_n^{s(P)}$.
	\end{proof}
	
	Majorization becomes useful for uncertainty once it is paired with Schur-concave functionals. A couple of examples will suffice to illustrate this point: the strong hyperbolic Shannon entropy in the setting of hyperbolic-valued probability distributions introduced in \cite{TellezSanchez2021} and the hyperbolic version of the quadratic Gini--Simpson entropy, obtained by a component-wise application of the classical Gini--Simpson functional \cite{Gini1912,Simpson1949}.
	
	\begin{definition}
		\label{def:hyperbolic-entropies}
		Let $n\ge 2$, and let $P=(p_1,\ldots,p_n)\in\PD(n)$, with
		$p_j=p_j^{(1)}\eone+p_j^{(2)}\etwo$. The strong hyperbolic Shannon
		entropy and the hyperbolic Gini--Simpson entropy of $P$ are defined by
		\[
		\begin{aligned}
			\HD(P)
			&=
			\left(
			-\sum_{j=1}^{n}p_j^{(1)}\log p_j^{(1)}
			\right)\eone
			+
			\left(
			-\sum_{j=1}^{n}p_j^{(2)}\log p_j^{(2)}
			\right)\etwo,\\[4pt]
			\GiniD(P)
			&=
			\left(
			s^{(1)}(P)
			-
			\sum_{j=1}^{n}\left(p_j^{(1)}\right)^2
			\right)\eone
			+
			\left(
			s^{(2)}(P)
			-
			\sum_{j=1}^{n}\left(p_j^{(2)}\right)^2
			\right)\etwo.
		\end{aligned}
		\]
		We use the convention $0\log 0=0$.
	\end{definition}
	
	The Gini--Simpson functional used here is the quadratic member of the
	Tsallis entropy family. Indeed, if $r=(r_1,\ldots,r_n)$ is a real
	probability vector, the Tsallis entropy is
	\[
	S_q(r)=\frac{1-\sum_{j=1}^{n}r_j^q}{q-1},
	\qquad q>0,\quad q\ne 1.
	\]
	Hence $S_2(r)=1-\sum_{j=1}^{n}r_j^2$, which is precisely the
	Gini--Simpson, or quadratic, entropy. Therefore, $\GiniD(P)$ is obtained
	by applying the $q=2$ Tsallis entropy to each active idempotent component
	of $P$
	\cite{Tsallis1988,Keylock2005}.
	
	The representation in the basis $\{1,\kh\}$ can be useful for
	interpreting these two entropies. Write
	\[
	\HD(P)=H^{(1)}(P)\eone+H^{(2)}(P)\etwo,
	\qquad
	\GiniD(P)=G^{(1)}(P)\eone+G^{(2)}(P)\etwo.
	\]
	Then
	\[
	\begin{aligned}
		\HD(P)
		&=
		\frac{H^{(1)}(P)+H^{(2)}(P)}{2}
		+
		\frac{H^{(1)}(P)-H^{(2)}(P)}{2}\kh,
		\\[4pt]
		\GiniD(P)
		&=
		\frac{G^{(1)}(P)+G^{(2)}(P)}{2}
		+
		\frac{G^{(1)}(P)-G^{(2)}(P)}{2}\kh.
	\end{aligned}
	\]
	Thus, for the proposed entropies, the coefficient of $1$ records the
	average of the two idempotent entropy values, while the coefficient of
	$\kh$ records their difference, up to the factor $1/2$.
	
	\begin{proposition}
		\label{prop:entropy-majorization-prelim}
		Let $n\ge 2$, and $P,Q\in\PD(n)$ with $s(P)=s(Q)$. If $P\Dmaj Q$, then
		\[
		\HD(P)\preceqD \HD(Q),
		\qquad
		\GiniD(P)\preceqD \GiniD(Q).
		\]
	\end{proposition}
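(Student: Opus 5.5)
The plan is to reduce the statement to the classical Schur-concavity of the Shannon and Gini--Simpson functionals, applied separately in each idempotent component. By the definition of $\preceqD$, the claim $\HD(P)\preceqD\HD(Q)$ is equivalent to the two real inequalities $H^{(\ell)}(P)\le H^{(\ell)}(Q)$ for $\ell=1,2$. Likewise, $\GiniD(P)\preceqD\GiniD(Q)$ is equivalent to $G^{(\ell)}(P)\le G^{(\ell)}(Q)$ for $\ell=1,2$. So I will fix $\ell\in\{1,2\}$ and argue in that component only.

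Since $s(P)=s(Q)$, the same components are active in $P$ and $Q$, so two cases suffice. If $s^{(\ell)}(P)=0$, then $P^{(\ell)}=Q^{(\ell)}=0$ because all entries are non-negative. With the convention $0\log 0=0$, both $\ell$-th Shannon components vanish, and both Gini--Simpson components equal $0-0=0$, so equality holds. If instead $s^{(\ell)}(P)=1$, then $x=P^{(\ell)}$ and $y=Q^{(\ell)}$ are real probability vectors with $x\maj y$ by Definition~\ref{def:hyperbolic-majorization}. It then remains to show $\phi(x)\le\phi(y)$ for $\phi(r)=-\sum_j r_j\log r_j$ and for $\phi(r)=1-\sum_j r_j^2$.

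Both functionals have the form $\phi(r)=\sum_j g(r_j)$, with $g$ continuous and concave on $[0,1]$: $g(t)=-t\log t$ (with $g(0)=0$) and $g(t)=t/n-t^2$ suitably normalized, or more simply $g(t)=-t^2$ plus the constant total mass. Symmetric functions of this form with $g$ concave are Schur-concave, by the Hardy--Littlewood--Pólya / Karamata inequality \cite{Hardy1934,Marshall2011}: if $x\maj y$ then $\sum_j g(x_j)\le\sum_j g(y_j)$. Applying this in each active component gives the required componentwise inequalities. Reassembling them through the idempotent representation yields the two $\preceqD$ relations.

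The only delicate point is the Shannon case at the boundary, where $g$ is concave on $[0,1]$ but not differentiable at $0$. I will handle it by quoting Karamata's inequality in its form for continuous concave functions on an interval, which needs no differentiability. Alternatively, I can use the standard fact that $x\maj y$ means $y=Dx$ for some doubly stochastic $D$ and apply Jensen's inequality row by row. Everything else is bookkeeping with the idempotent components.
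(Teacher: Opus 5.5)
Your proposal is correct and follows the paper's proof. Both arguments reduce $\preceqD$ to componentwise real inequalities and note that inactive components vanish in both $P$ and $Q$ because $s(P)=s(Q)$. Both then apply Schur-concavity of the Shannon and Gini--Simpson functionals in each active component; the paper just cites it, while you justify it via Karamata's inequality for the concave summands $-t\log t$ and $-t^2$.
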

	
	\begin{proof}
		The classical Shannon entropy and the classical Gini entropy are
		symmetric Schur-concave functions on the probability simplex. Hence,
		for every active idempotent component $\ell$,
		\[
		P^{(\ell)}\maj Q^{(\ell)}
		\quad\Longrightarrow\quad
		H(P^{(\ell)})\le H(Q^{(\ell)})
		\quad\text{and}\quad
		G(P^{(\ell)})\le G(Q^{(\ell)}).
		\]
		If the $\ell$-th component is inactive, then both
		$P^{(\ell)}$ and $Q^{(\ell)}$ are identically zero, because
		$s(P)=s(Q)$. Thus the same inequalities hold trivially in inactive
		components. The desired result follows from the component-wise order in
		$\D$.
	\end{proof}
	
	\begin{corollary}
		\label{cor:entropy-bounds}
		Let $n\ge 2$, and $P\in\PD(n)$. Then
		\[
		0\preceqD \HD(P)
		\preceqD
		s^{(1)}(P)\log(n)\eone
		+
		s^{(2)}(P)\log(n)\etwo,
		\]
		and
		\[
		0\preceqD \GiniD(P)
		\preceqD
		\left(s^{(1)}(P)\eone+s^{(2)}(P)\etwo\right)
		\left(1-\frac1n\right).
		\]
		In both cases, the upper bound is attained if and only if
		$P=U_n^{s(P)}$. The lower bound is attained if and only if $P$ is a
		hyperbolic vertex distribution.
	\end{corollary}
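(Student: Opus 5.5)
The plan is to combine Proposition~\ref{prop:vertices-and-uniform-majorization} with Proposition~\ref{prop:entropy-majorization-prelim}, and then settle the equality cases by strict Schur-concavity, one idempotent component at a time.

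\textbf{Bounds.} Let $V$ be any hyperbolic vertex distribution with $s(V)=s(P)$. Proposition~\ref{prop:vertices-and-uniform-majorization} gives $V\Dmaj P\Dmaj U_n^{s(P)}$. Proposition~\ref{prop:entropy-majorization-prelim} then yields
\[
\HD(V)\preceqD \HD(P)\preceqD \HD(U_n^{s(P)}),
\]
and the analogous chain for $\GiniD$. It remains to evaluate the entropies at the extremes, component by component.

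For an active component $\ell$, the point mass $\delta_\alpha$ has Shannon entropy $0$ and Gini entropy $1-1=0$. The uniform vector $(1/n,\ldots,1/n)$ has Shannon entropy $\log n$ and Gini entropy $1-1/n$.

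For an inactive component, every distribution with that total mass has zero component. Both entropies then vanish, since $0\log 0=0$ and $s^{(\ell)}(P)=0$. This matches the factors $s^{(\ell)}(P)$ in the stated bounds, because $s^{(\ell)}\in\{0,1\}$. Hence $\HD(V)=\GiniD(V)=0$, and the right-hand sides are exactly the values at $U_n^{s(P)}$.

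\textbf{Equality cases.} Equality in $\preceqD$ means equality in each idempotent component. Inactive components always give equality and impose no constraint, since they are forced to be zero in both $P$ and the extreme distribution. So it suffices to show that, for a real probability vector $r\in\Delta_n$:
\begin{itemize}
\item[(a)] $H(r)=\log n$ if and only if $r$ is uniform, and $1-\sum r_j^2=1-1/n$ if and only if $r$ is uniform;
\item[(b)] $H(r)=0$ if and only if $r$ is a point mass, and $1-\sum r_j^2=0$ if and only if $r$ is a point mass.
\end{itemize}

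For (a), I would use strict concavity of $t\mapsto -t\log t$ and of $t\mapsto -t^2$ together with Jensen's inequality. Alternatively, for Gini, use Cauchy--Schwarz, $\sum r_j^2\ge 1/n$, with equality iff all $r_j$ are equal.

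For (b), each term $-r_j\log r_j\ge 0$ vanishes iff $r_j\in\{0,1\}$. Also $\sum r_j^2\le\sum r_j=1$, with equality iff each $r_j\in\{0,1\}$. Either condition, combined with $\sum r_j=1$, forces a point mass.

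\textbf{Conclusion.} Applying (a) and (b) to each active component shows that the upper bound is attained iff every active component is uniform, i.e.\ $P=U_n^{s(P)}$. The lower bound is attained iff every active component is a point mass, which is exactly Definition~\ref{def:hyperbolic-vertex-distribution}.

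\textbf{Main obstacle.} The only delicate point is the equality characterization. Schur-concavity as invoked in Proposition~\ref{prop:entropy-majorization-prelim} gives only weak inequalities, so strictness must come from the direct concavity arguments in (a) and (b) rather than from majorization. Care is also needed with inactive components, so that the "if and only if" statements are not vacuously misapplied to them.
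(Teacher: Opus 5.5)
Your proposal is correct and takes the same route as the paper. Like the paper, you sandwich $V\Dmaj P\Dmaj U_n^{s(P)}$, apply Proposition~\ref{prop:entropy-majorization-prelim}, evaluate the entropies at the two extremes, and settle the equality cases one active component at a time. The only difference is that you prove the classical equality characterizations explicitly, via Jensen or Cauchy--Schwarz for the maximum and via $r_j^2\le r_j$ and $-r_j\log r_j\ge 0$ for the minimum, whereas the paper simply cites them.
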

	
	\begin{proof}
		Let $V\in\PD(n)$ be a hyperbolic vertex distribution with
		$s(V)=s(P)$. By Proposition~\ref{prop:vertices-and-uniform-majorization},
		\[
		V\Dmaj P\Dmaj U_n^{s(P)}.
		\]
		Applying Proposition~\ref{prop:entropy-majorization-prelim} gives
		\[
		\HD(V)\preceqD \HD(P)\preceqD \HD(U_n^{s(P)}),
		\qquad
		\GiniD(V)\preceqD \GiniD(P)\preceqD \GiniD(U_n^{s(P)}).
		\]
		
		For every active component, $V^{(\ell)}$ is a real point
		distribution, so its Shannon and Gini entropies are both zero. Inactive
		components are also zero. Hence $\HD(V)=\GiniD(V)=0$.
		
		On the other hand, the active components of $U_n^{s(P)}$ are equal to
		$(1/n,\ldots,1/n)$. Therefore
		\[
		\HD(U_n^{s(P)})
		=
		s^{(1)}(P)\log(n)\eone
		+
		s^{(2)}(P)\log(n)\etwo
		\]
		and
		\[
		\GiniD(U_n^{s(P)})
		=
		\left(s^{(1)}(P)\eone+s^{(2)}(P)\etwo\right)
		\left(1-\frac1n\right).
		\]
		This proves the stated bounds. The equality cases follow from the
		classical equality cases; entropy is maximal only at the uniform
		probability distribution and minimal only at point distributions, applied to
		each active idempotent component.
		
	\end{proof}
	
	The next examples illustrate two different ways in which hyperbolic entropy may behave. In the first family, the two idempotent entropy components vary independently. In the second family, the two components are permutations of each other, and the hyperbolic entropy becomes real-valued.
	
	\begin{example}
		\label{ex:entropy-square-family}
		Let $0\le x,y\le1$, and consider
		\[
		P_{x,y}
		=
		\left(
		x\eone+y\etwo,\,
		(1-x)\eone,\,
		(1-y)\etwo
		\right).
		\]
		Then $P_{x,y}\in\PD(3)$ and $s(P_{x,y})=1$. Indeed,
		\[
		P_{x,y}^{(1)}=(x,1-x,0),
		\qquad
		P_{x,y}^{(2)}=(y,0,1-y).
		\]
		Thus both idempotent components are real probability distributions.
		
		The strong hyperbolic Shannon entropy is
		\[
		\HD(P_{x,y})=h(x)\eone+h(y)\etwo,
		\]
		where
		\[
		h(t)=-t\log t-(1-t)\log(1-t),
		\qquad 0\le t\le1.
		\]
		Similarly, the hyperbolic Gini
		entropy is
		\[
		\GiniD(P_{x,y})
		=
		2x(1-x)\eone+2y(1-y)\etwo.
		\]
		
		Hence the two idempotent components vary independently, as illustrated
		in Figure~\ref{fig:entropy-square-family}. Panels (a) and (c)
		correspond to the first idempotent component, namely
		$H^{(1)}(P_{x,y})=h(x)$ and $G^{(1)}(P_{x,y})=2x(1-x)$, and
		therefore depend only on $x$; accordingly, the corresponding
		surfaces are constant along the $y$-direction. Likewise, panels (b)
		and (d) correspond to the second idempotent component, namely
		$H^{(2)}(P_{x,y})=h(y)$ and $G^{(2)}(P_{x,y})=2y(1-y)$, and
		therefore depend only on $y$; hence these surfaces are constant
		along the $x$-direction.
		
		\begin{figure}[h]
			\centering
			\includegraphics[width=0.8\textwidth]{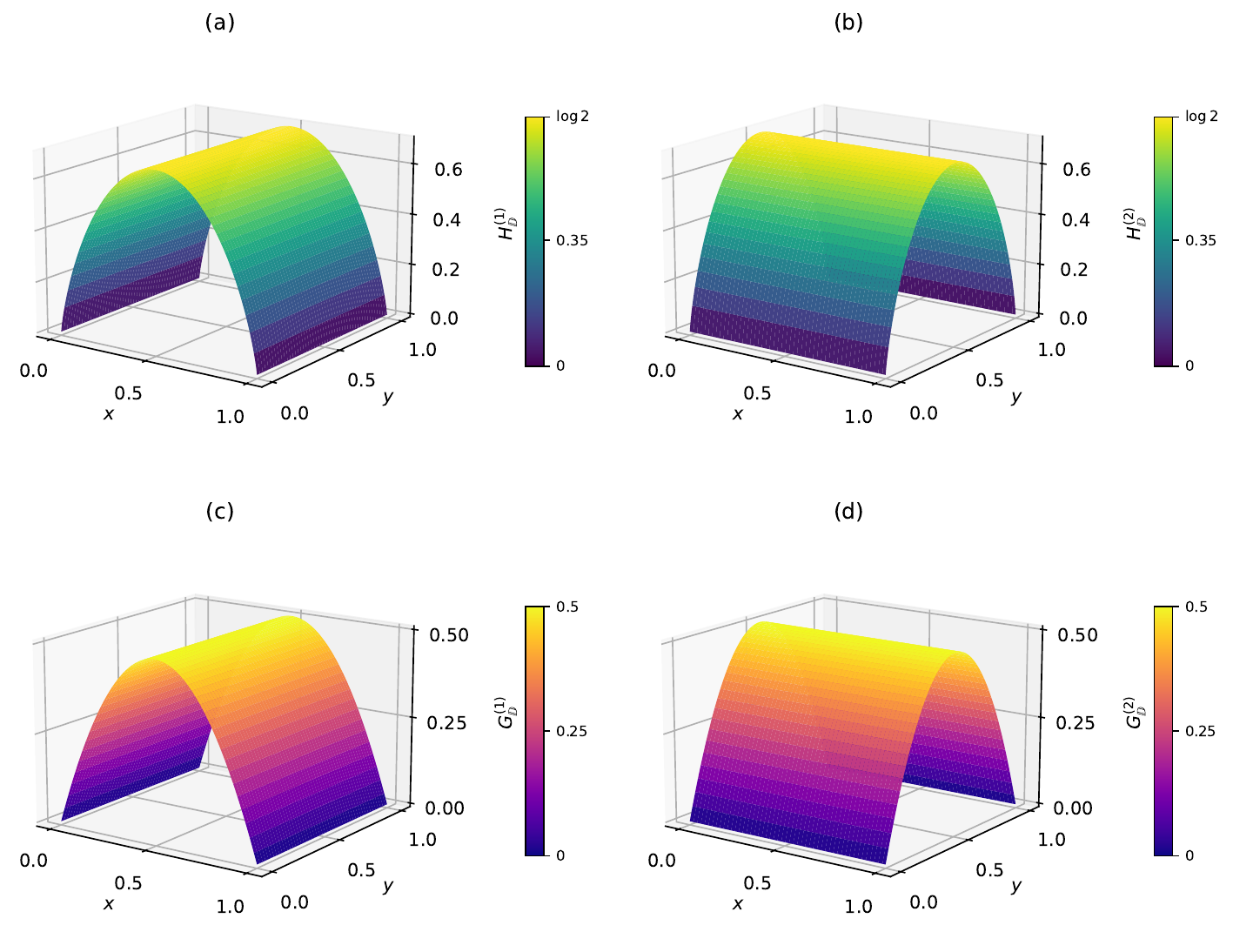}
			\caption{Entropy components for the family
				$P_{x,y}$. (a) $H^{(1)}(P_{x,y})=h(x)$;
				(b) $H^{(2)}(P_{x,y})=h(y)$;
				(c) $G^{(1)}(P_{x,y})=2x(1-x)$;
				(d) $G^{(2)}(P_{x,y})=2y(1-y)$.}
			\label{fig:entropy-square-family}
		\end{figure}
		
		The minimum is attained when $x,y\in\{0,1\}$, in which case
		$P_{x,y}$ is a hyperbolic vertex distribution. The maximum within
		this family is attained at $x=y=1/2$, where
		\[
		\HD(P_{1/2,1/2})=\log(2),
		\qquad
		\GiniD(P_{1/2,1/2})=\frac12.
		\]
	\end{example}
	
	\begin{example}
		\label{ex:entropy-triangular-family}
		Let
		\[
		T=\{(x,y)\in\R^2\mid x \ge 0,\ y \ge 0,\ x+y\le1\}.
		\]
		For $(x,y)\in T$, consider
		\[
		P_{x,y}
		=
		\left(
		x\eone+y\etwo,\,
		y\eone+(1-x-y)\etwo,\,
		(1-x-y)\eone+x\etwo
		\right).
		\]
		Then $P_{x,y}\in\PD(3)$ and $s(P_{x,y})=1$. Its idempotent
		components are
		\[
		P_{x,y}^{(1)}=(x,y,1-x-y),
		\qquad
		P_{x,y}^{(2)}=(y,1-x-y,x).
		\]
		Thus $P_{x,y}^{(2)}$ is a permutation of $P_{x,y}^{(1)}$. Since
		Shannon and Gini entropies are symmetric, both idempotent components
		have the same entropy.
		
		Therefore,
		\[
		\HD(P_{x,y})
		=
		H(x,y,1-x-y)\,(\eone+\etwo)
		=
		H(x,y,1-x-y),
		\]
		where
		\[
		H(x,y,1-x-y)
		=
		-x\log x-y\log y-(1-x-y)\log(1-x-y).
		\]
		Likewise,
		\[
		\GiniD(P_{x,y})
		=
		\left[
		1-x^2-y^2-(1-x-y)^2
		\right](\eone+\etwo).
		\]
		In this family the hyperbolic entropies are real-valued. The two
		surfaces are shown in Figure~\ref{fig:entropy-triangular-family}. Both
		entropies vanish at the vertices of $T$, where $P_{x,y}$ is a
		hyperbolic vertex distribution, and both are maximal at the barycenter
		$(x,y)=(1/3,1/3)$, where $P_{1/3,1/3}=U_3^1$. In particular,
		\[
		\HD(P_{1/3,1/3})=\log(3),
		\qquad
		\GiniD(P_{1/3,1/3})=\frac23.
		\]
		
		\begin{figure}[h]
			\centering
			\includegraphics[width=0.8\textwidth]{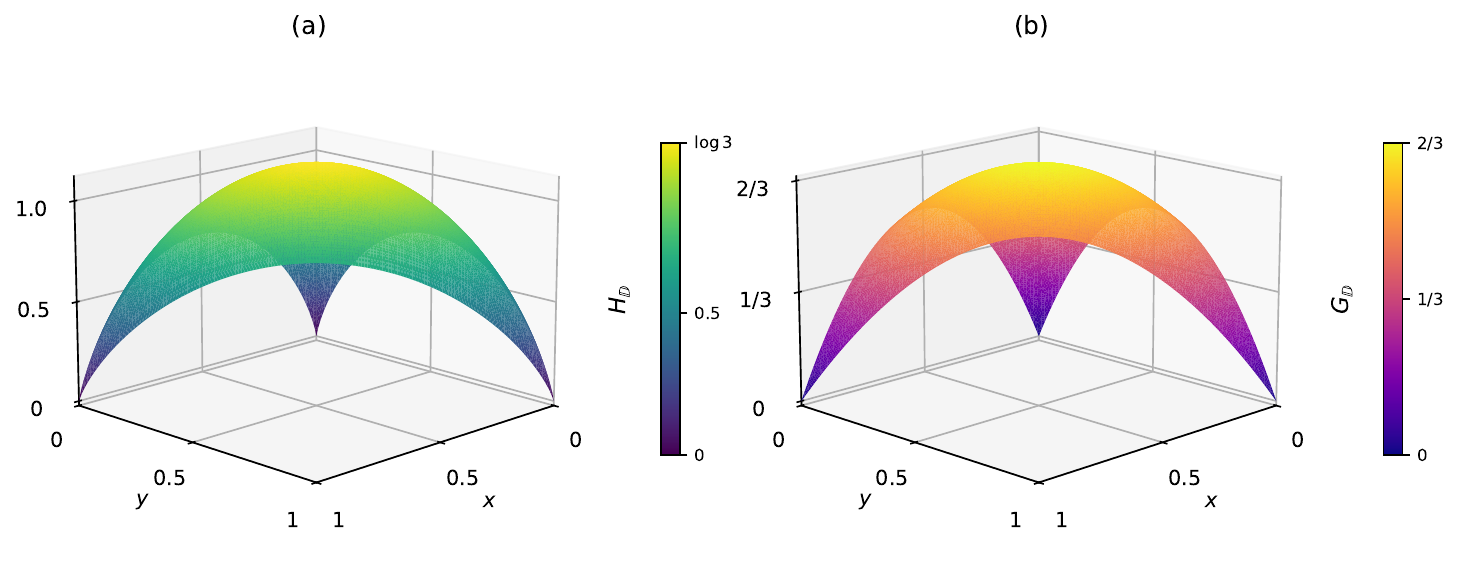}
			\caption{Entropy surfaces for the triangular family:
				(a) $\HD(P_{x,y})=H(x,y,1-x-y)$;
				(b) $\GiniD(P_{x,y})=1-x^2-y^2-(1-x-y)^2$.}
			\label{fig:entropy-triangular-family}
		\end{figure}
	\end{example}
	
	\section{Hyperbolic negators and structural properties}
	\label{sec-hyperbolic-negators}
	
	We now turn to the notion of negation. In the real-valued setting,
	Batyrshin et al.~\cite{Batyrshin2021a} distinguish probability
	transformations, negations of probability distributions, and pointwise
	negators generating such negations. We adopt the same scheme in the
	hyperbolic setting. 
	
	Since a hyperbolic probability distribution may have
	total mass $1$, $\eone$, or $\etwo$, probability transformations are
	required to preserve the idempotent total mass.
	
	\begin{definition}
		\label{def-hyperbolic-probability-transformation}
		Let $n\ge 2$. A map $\mathcal T:\PD(n)\to\PD(n)$ is called a
		hyperbolic probability transformation if, for every
		$P=(p_1,\ldots,p_n)\in\PD(n)$, with
		$\mathcal T(P)=Q=(q_1,\ldots,q_n)$, one has
		\[
		s(Q)=s(P), 
		\qquad \text{and} \qquad
		p_i=p_j
		\quad\Longrightarrow\quad
		q_i=q_j.
		\]
	\end{definition}
	
	We now introduce hyperbolic negators as the corresponding hyperbolic extension of the order-sensitive negations of real probability distributions introduced by Yager and further developed by Batyrshin et al.~\cite{Yager2015,Batyrshin2021a}. In the real case, if one coordinate is not larger than another, then the corresponding transformed coordinate is not smaller. In the hyperbolic case, this requirement is imposed with respect to the partial order $\preceqD$.
	
	\begin{definition}
		\label{def-hyperbolic-negator}
		Let $n\ge 2$. A hyperbolic probability transformation
		$\Neg:\PD(n)\to\PD(n)$ is called a hyperbolic negator if,
		for every $P=(p_1,\ldots,p_n)\in\PD(n)$, with
		$\Neg(P)=Q=(q_1,\ldots,q_n)$, one has
		\[
		p_i\preceqD p_j
		\quad\Longrightarrow\quad
		q_j\preceqD q_i.
		\]
	\end{definition}
	
	\begin{remark}
		Although equality preservation is included in the definition of a
		hyperbolic probability transformation, it is also forced by the
		order-reversing condition for negators. Indeed, if $p_i=p_j$, then
		$p_i\preceqD p_j$ and $p_j\preceqD p_i$. Hence $q_j\preceqD q_i$ and
		$q_i\preceqD q_j$, so $q_i=q_j$.
	\end{remark}
	
	Before introducing concrete families of negators, we point out a feature
	that is specific to the hyperbolic setting. In the real-valued case, the
	usual order on $[0,1]$ is total. Hence, if a real negator $N$ has a fixed
	point $P=(p_1,\ldots,p_n)$, then $N(P)=P$ and the negaotr condition imply that,
	whenever $p_i\le p_j$, one has $p_j\le p_i$. Since every pair of real
	coordinates is comparable, it follows that $p_i=p_j$ for all $i,j$.
	Thus, in this case, the only fixed point is the uniform distribution.
	
	The hyperbolic case is different because $\preceqD$ is only a partial
	order. Distinct hyperbolic probabilities may be incomparable, and the
	definition of a hyperbolic negator imposes no comparison condition on
	such pairs. Consequently, a general hyperbolic negator may admit
	non-uniform fixed points.
	
	\begin{example}
		Let $n=2$, and consider
		\[
		P=
		\bigl(
		\tfrac34\eone+\tfrac14\etwo,\,
		\tfrac14\eone+\tfrac34\etwo
		\bigr)
		\in\PD(2).
		\]
		The two entries of $P$ are incomparable with respect to $\preceqD$, since
		\[
		\tfrac34\eone+\tfrac14\etwo
		\not\preceqD
		\tfrac14\eone+\tfrac34\etwo,
		\qquad
		\tfrac14\eone+\tfrac34\etwo
		\not\preceqD
		\tfrac34\eone+\tfrac14\etwo.
		\]
		Define $\Neg:\PD(2)\to\PD(2)$ by
		\[
		\Neg(R)=
		\begin{cases}
			P, & R=P,\\[2pt]
			U_2^{s(R)}, & R\neq P.
		\end{cases}
		\]
		Then $\Neg$ preserves the total hyperbolic mass. If $R\neq P$, the image
		$\Neg(R)$ is uniform on its mass stratum, so the order-reversing condition is
		satisfied trivially. If $R=P$, the only comparable pairs among the two
		entries are the trivial ones. Hence the order-reversing condition is also
		satisfied. Therefore $\Neg$ is a hyperbolic negator.
		
		Moreover,
		\[
		\Neg(P)=P,
		\]
		so $P$ is a fixed point. However,
		\[
		P\neq U_2^1.
		\]
		Thus a general hyperbolic negator may have non-uniform fixed points. This
		phenomenon is caused by the partial nature of the hyperbolic order.
	\end{example}
	
	The previous definition is intentionally broad. In the real-valued theory, generated negators are obtained by applying a non-negative non-increasing function to the entries of a probability distribution and then normalizing the resulting weights \cite{Batyrshin2021,Batyrshin2021a}. We adapt this idea to the hyperbolic setting. To obtain negators with analyzable entropy behavior, we now introduce a generated class in which an order-reversing hyperbolic weight function is applied entrywise and the normalization is performed separately in each active idempotent component.
	
	\begin{definition}
		\label{def:admissible-hyperbolic-generator}
		Let $n\ge 2$. A map $\Phi:\Dint{0}{1}\longrightarrow \Dpos$, with
		\[
		\Phi(z)=\Phi^{(1)}(z)\eone+\Phi^{(2)}(z)\etwo,
		\]
		is called an admissible hyperbolic generator if the following
		conditions hold:
		\begin{enumerate}[label=\textup{(\roman*)}]
			\item $\Phi$ is order-reversing, that is,
			\[
			z\preceqD w
			\quad\Longrightarrow\quad
			\Phi(w)\preceqD \Phi(z).
			\]
			\item For every $P=(p_1,\ldots,p_n)\in\PD(n)$ and every active
			idempotent component $\ell$, one has
			\[
			Z_{\ell}^{\Phi}(P)
			=
			\sum_{j=1}^{n}\Phi^{(\ell)}(p_j)>0.
			\]
		\end{enumerate}
	\end{definition}
	
	Given an admissible generator, the associated transformation is obtained by normalizing the generated weights. The normalization is performed component-wise, so inactive components remain inactive and active components remain real probability distributions.
	
	\begin{definition}
		\label{def:generated-hyperbolic-transformation}
		Let $n\ge 2$, and let $\Phi:\Dint{0}{1}\to\Dpos$ be an admissible hyperbolic generator. The transformation
		generated by $\Phi$ is the map
		\[
		\Neg_{\Phi}:\PD(n)\longrightarrow\PD(n),
		\qquad
		\Neg_{\Phi}(P)=Q=(q_1,\ldots,q_n),
		\]
		defined component-wise as follows. For $j=1,\ldots,n$ and
		$\ell=1,2$,
		\[
		q_j^{(\ell)}
		=
		\begin{cases}
			\displaystyle
			\frac{\Phi^{(\ell)}(p_j)}
			{Z_{\ell}^{\Phi}(P)},
			& \text{if } s^{(\ell)}(P)=1,\\[10pt]
			0,
			& \text{if } s^{(\ell)}(P)=0.
		\end{cases}
		\]
		Equivalently,
		\[
		q_j=q_j^{(1)}\eone+q_j^{(2)}\etwo.
		\]
	\end{definition}
	
	\begin{proposition}
		\label{prop:generated-transformation-is-negator}
		Let $n\ge 2$, and $\Phi:\Dint{0}{1}\to\Dpos$ be an admissible hyperbolic generator. Then
		$\Neg_{\Phi}$ is a hyperbolic negator.
	\end{proposition}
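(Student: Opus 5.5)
To prove that $\Neg_{\Phi}$ is a hyperbolic negator, we check the conditions of Definition~\ref{def-hyperbolic-negator} directly. The first is that $\Neg_{\Phi}$ maps $\PD(n)$ into $\PD(n)$ and preserves total mass. The second is equality preservation, which by the remark following Definition~\ref{def-hyperbolic-negator} is automatic once order reversal holds. The third is order reversal itself. All three can be verified componentwise in the idempotent basis, with a case split according to whether the $\ell$-th component of $P$ is active or inactive.

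First, fix $P\in\PD(n)$ and set $Q=\Neg_{\Phi}(P)$. For an active component $\ell$, condition (ii) of Definition~\ref{def:admissible-hyperbolic-generator} gives $Z_\ell^{\Phi}(P)>0$, so each $q_j^{(\ell)}$ is well defined. Since $\Phi$ takes values in $\Dpos$, we have $\Phi^{(\ell)}(p_j)\ge 0$. Hence $q_j^{(\ell)}\ge 0$ and $\sum_j q_j^{(\ell)}=1$, which also forces $q_j^{(\ell)}\le 1$. For an inactive component $\ell$, every $q_j^{(\ell)}=0$ and $s^{(\ell)}(Q)=0$. Therefore $q_j\in\Dint{0}{1}$ for all $j$, and $s^{(\ell)}(Q)=s^{(\ell)}(P)$ for $\ell=1,2$. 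This gives $s(Q)=s(P)\in\{1,\eone,\etwo\}$, so $Q\in\PD(n)$.

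Next, suppose $p_i\preceqD p_j$. By condition (i) of Definition~\ref{def:admissible-hyperbolic-generator}, $\Phi(p_j)\preceqD\Phi(p_i)$, which means $\Phi^{(\ell)}(p_j)\le\Phi^{(\ell)}(p_i)$ for $\ell=1,2$. For an active component $\ell$, we divide by the common positive constant $Z_\ell^{\Phi}(P)$, which preserves the inequality and yields $q_j^{(\ell)}\le q_i^{(\ell)}$. For an inactive component both sides are $0$. Combining the two components gives $q_j\preceqD q_i$. Equality preservation then follows either from the remark or directly: $p_i=p_j$ implies $\Phi(p_i)=\Phi(p_j)$, and hence $q_i=q_j$.

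No step presents a genuine obstacle. The only point requiring care is that the normalizing constant $Z_\ell^{\Phi}(P)$ is the same for every index $j$ within a fixed component. This is exactly why componentwise normalization preserves the componentwise order, whereas a normalization that mixed the two components could fail to do so. It also matters that admissibility condition (ii) is imposed only on active components, which is precisely where the division is performed.
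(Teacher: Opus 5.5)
Your proposal is correct and follows essentially the same route as the paper's proof. Both work componentwise: active components normalize to real probability distributions and inactive ones stay zero, which gives $s(Q)=s(P)$. Order reversal then comes from $\Phi(p_j)\preceqD\Phi(p_i)$ and division by the positive constant $Z_\ell^{\Phi}(P)$. Your explicit checks that $q_j^{(\ell)}\le 1$ and that equality is preserved are minor details the paper leaves implicit.
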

	
	\begin{proof}
		Let $P=(p_1,\ldots,p_n)\in\PD(n)$, and let
		$\Neg_{\Phi}(P)=Q=(q_1,\ldots,q_n)$. Fix
		$\ell\in\{1,2\}$.
		
		If $s^{(\ell)}(P)=1$, then
		\[
		\sum_{j=1}^{n}q_j^{(\ell)}
		=
		\frac{1}{Z_{\ell}^{\Phi}(P)}
		\sum_{j=1}^{n}\Phi^{(\ell)}(p_j)
		=
		1.
		\]
		Moreover, $q_j^{(\ell)} \ge 0$ for every $j$. Hence the
		$\ell$-th component of $Q$ is a real probability distribution.
		
		If $s^{(\ell)}(P)=0$, then $q_j^{(\ell)}=0$ for every $j$. Thus
		the $\ell$-th component of $Q$ is inactive. Therefore
		$Q\in\PD(n)$ and $s(Q)=s(P)$.
		
		It remains to prove the order-reversing property. Suppose that
		$p_i\preceqD p_j$. Since $\Phi$ is order-reversing,
		\[
		\Phi(p_j)\preceqD \Phi(p_i).
		\]
		Thus, for each active component $\ell$,
		\[
		\Phi^{(\ell)}(p_j)\le \Phi^{(\ell)}(p_i).
		\]
		Dividing by the positive number $Z_{\ell}^{\Phi}(P)$ gives
		$q_j^{(\ell)}\le q_i^{(\ell)}$. Inactive components are identically
		zero. Hence $q_j\preceqD q_i$, and $\Neg_{\Phi}$ is a hyperbolic
		negator.
	\end{proof}
	
	The basic reference example in this class is the hyperbolic adaptation of Yager's real-valued negation of a probability distribution \cite{Yager2015}. The construction below is our component-wise version of that transformation and will serve as a reference map in the comparison results that follow.	
	
	\begin{example}
		\label{ex-yager-generator}
		Let $n\ge 2$, and consider the hyperbolic-valued function
		\[
		\Phi_Y:\Dint{0}{1}\longrightarrow \Dpos,
		\qquad
		\Phi_Y(z)=1-z.
		\]
		This function is order-reversing. Indeed, if $z\preceqD w$, then
		$1-w\preceqD 1-z$. Hence $\Phi_Y$ is an admissible hyperbolic
		generator.
		
		Let $P=(p_1,\ldots,p_n)\in\PD(n)$, and write $Q=\Neg_{\Phi_Y}(P)=(q_1,\ldots,q_n)$.	For every active idempotent component $\ell$, the normalization factor is
		\[
		Z_{\ell}^{\Phi_Y}(P)
		=
		\sum_{j=1}^{n}\left(1-p_j^{(\ell)}\right)
		=
		n-\sum_{j=1}^{n}p_j^{(\ell)}
		=
		n-1.
		\]
		Therefore,
		\[
		q_j
		=
		\frac{s^{(1)}(P)-p_j^{(1)}}{n-1}\eone
		+
		\frac{s^{(2)}(P)-p_j^{(2)}}{n-1}\etwo
		=
		\frac{s(P)-p_j}{n-1}.
		\]
		Thus
		\[
		\Neg_{\Phi_Y}(P)
		=
		\left(
		\frac{s(P)-p_1}{n-1},
		\ldots,
		\frac{s(P)-p_n}{n-1}
		\right).
		\]
		In this way we obtain what we shall call the Yager hyperbolic negator and denote by $\Yager$.
	\end{example}
	
	Involutivity is a classical property of logical negation and has also been studied for negations of probability distributions \cite{Batyrshin2021b,Batyrshin2023}. A natural question is whether generated hyperbolic negators are involutive. The next result shows that, under a simple visible obstruction at vertex distributions the involutivity fails. Thus, order reversal and normalization do not generally produce a reversible operation.
	
	\begin{theorem}
		\label{thm:vertex-obstruction-involutivity}
		Let $n\ge 2$, and $\Phi:\Dint{0}{1}\to\Dpos$ be an admissible hyperbolic
		generator, and let $\Neg_{\Phi}$ be the hyperbolic negator generated
		by $\Phi$. For $\ell=1,2$, set $Z_{\ell}^{\Phi} =  \Phi^{(\ell)}(1)+(n-1)\Phi^{(\ell)}(0)$,
		and define
		\[
		\phi_1^{\Phi}
		=
		\frac{\Phi^{(1)}(1)}{Z_1^{\Phi}}\eone
		+
		\frac{\Phi^{(2)}(1)}{Z_2^{\Phi}}\etwo,
		\qquad
		\phi_0^{\Phi}
		=
		\frac{\Phi^{(1)}(0)}{Z_1^{\Phi}}\eone
		+
		\frac{\Phi^{(2)}(0)}{Z_2^{\Phi}}\etwo.
		\]
		If $\Phi(\phi_0^{\Phi})\neq0$, then $\Neg_{\Phi}$ is not
		involutive.
	\end{theorem}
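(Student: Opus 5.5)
The plan is to test involutivity on the simplest possible input: a hyperbolic vertex distribution with both components active and concentrated at the same point. Fix $\alpha\in\{1,\ldots,n\}$ and let $V=\delta_\alpha\eone+\delta_\alpha\etwo$, so $s(V)=1$ and $v_\alpha=1$, $v_j=0$ for $j\neq\alpha$. I will compute $\Neg_{\Phi}(V)$ and then $\Neg_{\Phi}(\Neg_{\Phi}(V))$ explicitly from Definition~\ref{def:generated-hyperbolic-transformation}. The aim is to show that $\Neg_{\Phi}(\Neg_{\Phi}(V))=V$ would force $\Phi(\phi_0^{\Phi})=0$.

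\emph{First step.} Both components of $V$ are active. For each $\ell$, the normalizing constant is
\[
Z_\ell^{\Phi}(V)=\Phi^{(\ell)}(1)+(n-1)\Phi^{(\ell)}(0)=Z_\ell^{\Phi},
\]
which is strictly positive by condition (ii) of Definition~\ref{def:admissible-hyperbolic-generator}. Hence $W=\Neg_{\Phi}(V)$ has $w_\alpha=\phi_1^{\Phi}$ and $w_j=\phi_0^{\Phi}$ for every $j\neq\alpha$. By Proposition~\ref{prop:generated-transformation-is-negator}, $W\in\PD(n)$ with $s(W)=1$. In particular each entry lies in $\Dint{0}{1}$, so $\phi_0^{\Phi}\in\Dint{0}{1}$ and $\Phi(\phi_0^{\Phi})$ is defined.

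\emph{Second step.} Apply $\Neg_{\Phi}$ again. Both components of $W$ are active, and admissibility (ii) applied to $W$ gives $Z_\ell^{\Phi}(W)>0$. For $j\neq\alpha$ we obtain
\[
\bigl(\Neg_{\Phi}(W)\bigr)_j^{(\ell)}=\frac{\Phi^{(\ell)}(\phi_0^{\Phi})}{Z_\ell^{\Phi}(W)},\qquad \ell=1,2.
\]
Since $n\ge2$, at least one index $j\neq\alpha$ exists.

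\emph{Conclusion.} Suppose $\Neg_{\Phi}$ were involutive. Then $\Neg_{\Phi}(W)=V$, so these entries must equal $v_j=0$ in both idempotent components. This gives $\Phi^{(1)}(\phi_0^{\Phi})=\Phi^{(2)}(\phi_0^{\Phi})=0$, that is, $\Phi(\phi_0^{\Phi})=0$, contradicting the hypothesis. Hence $V$ itself witnesses $\Neg_{\Phi}^2(V)\neq V$.

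The main point of care is bookkeeping rather than a genuine obstacle. One must check that the normalizers at both stages are positive, which follows from admissibility, and that $\phi_0^{\Phi}$ lies in the domain of $\Phi$. It is also worth noting why both components are made active: the hypothesis $\Phi(\phi_0^{\Phi})\neq0$ only guarantees that at least one $\Phi^{(\ell)}(\phi_0^{\Phi})$ is nonzero. Taking $s(V)=1$ ensures that whichever component is nonzero is active, so it produces a nonzero entry off $\alpha$.
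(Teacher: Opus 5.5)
Your proposal is correct and follows essentially the same route as the paper's proof. The paper also tests involutivity on the vertex distribution $V=(1,0,\ldots,0)$ with $s(V)=1$, computes $\Neg_{\Phi}(V)=(\phi_1^{\Phi},\phi_0^{\Phi},\ldots,\phi_0^{\Phi})$, and observes that the off-vertex entries of $\Neg_{\Phi}^2(V)$ equal $\Phi^{(\ell)}(\phi_0^{\Phi})$ divided by a positive normalizer, which is nonzero in some component. Your general index $\alpha$ and your explicit checks (positive normalizers at both stages, and $\phi_0^{\Phi}\in\Dint{0}{1}$) only spell out details the paper leaves implicit.
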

	
	\begin{proof}
		Consider the hyperbolic vertex distribution
		$V=(1,0,\ldots,0)\in\PD(n)$, which has total mass $s(V)=1$.
		Since $\Neg_{\Phi}$ is generated by $\Phi$, we have
		\[
		\Neg_{\Phi}(V)
		=
		(\phi_1^{\Phi},\phi_0^{\Phi},\ldots,\phi_0^{\Phi}).
		\]
		Indeed, in the $\ell$-th idempotent component, the corresponding
		normalizing factor is $Z_{\ell}^{\Phi}$.
		
		Let $\Neg_{\Phi}^2(V)=R=(r_1,\ldots,r_n)$. Then, for every
		$j\neq1$ and every $\ell\in\{1,2\}$,
		\[
		r_j^{(\ell)}
		=
		\frac{\Phi^{(\ell)}(\phi_0^{\Phi})}
		{\Phi^{(\ell)}(\phi_1^{\Phi})+(n-1)\Phi^{(\ell)}(\phi_0^{\Phi})}.
		\]
		If $\Phi(\phi_0^{\Phi})\neq0$, then
		$\Phi^{(\ell)}(\phi_0^{\Phi})>0$ for at least one idempotent
		component $\ell$. Hence $r_j^{(\ell)}>0$ for every $j\neq1$ in
		that component. Therefore $R\neq V$, and so
		$\Neg_{\Phi}^2(V)\neq V$. Thus $\Neg_{\Phi}$ is not involutive.
	\end{proof}
	
	\begin{corollary}
		\label{cor:zero-at-one-noninvolutive}
		Let $n>2$, and let $\Phi:\Dint{0}{1}\to\Dpos$ be an admissible
		hyperbolic generator such that
		\[
		\Phi(1)=0
		\qquad\text{and}\qquad
		\Phi\left(\frac{1}{n-1}\right)\neq0.
		\]
		Then the generated hyperbolic negator $\Neg_{\Phi}$ is not
		involutive.
	\end{corollary}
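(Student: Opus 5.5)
The plan is to reduce the corollary directly to Theorem~\ref{thm:vertex-obstruction-involutivity}: I will compute $\phi_1^{\Phi}$ and $\phi_0^{\Phi}$ explicitly under the hypothesis $\Phi(1)=0$, and then check that $\Phi(\phi_0^{\Phi})\neq0$.

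First I check that the normalizing constants are nonzero. Since $\Phi(1)=0$, both idempotent components vanish, $\Phi^{(1)}(1)=\Phi^{(2)}(1)=0$. Hence $Z_\ell^{\Phi}=(n-1)\Phi^{(\ell)}(0)$ for $\ell=1,2$. Condition (ii) of Definition~\ref{def:admissible-hyperbolic-generator} applies to the vertex distribution $V=(1,0,\ldots,0)$, which has $s(V)=1$, so both components are active. It gives
\[
0<Z_\ell^{\Phi}(V)=\Phi^{(\ell)}(1)+(n-1)\Phi^{(\ell)}(0)=(n-1)\Phi^{(\ell)}(0).
\]
So $\Phi^{(\ell)}(0)>0$ in each component, and the quantities in the theorem are well defined.

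Next I compute the two special values:
\[
\phi_1^{\Phi}=0,\qquad \phi_0^{\Phi}=\frac{1}{n-1}\eone+\frac{1}{n-1}\etwo=\frac{1}{n-1}.
\]
Thus $\phi_0^{\Phi}$ is the real number $1/(n-1)$ viewed in $\D$. Because $n>2$, this number lies in $\Dint{0}{1}$, so $\Phi(\phi_0^{\Phi})$ makes sense. By hypothesis, $\Phi(\phi_0^{\Phi})=\Phi\bigl(\tfrac{1}{n-1}\bigr)\neq0$, so Theorem~\ref{thm:vertex-obstruction-involutivity} gives that $\Neg_{\Phi}$ is not involutive.

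There is no real obstacle here. The only point needing care is justifying $\Phi^{(\ell)}(0)>0$ through admissibility at the vertex distribution, which is what makes the cancellation in $\phi_0^{\Phi}$ legitimate. The role of $n>2$ is only to make the hypotheses consistent: for $n=2$, $\tfrac{1}{n-1}=1$, and the two assumptions $\Phi(1)=0$ and $\Phi\bigl(\tfrac{1}{n-1}\bigr)\neq0$ would contradict each other.
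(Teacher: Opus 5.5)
Your proposal is correct and follows the paper's proof exactly. You use $\Phi(1)=0$ and admissibility to get $Z_\ell^{\Phi}=(n-1)\Phi^{(\ell)}(0)>0$, deduce $\phi_0^{\Phi}=\frac{1}{n-1}$, and invoke Theorem~\ref{thm:vertex-obstruction-involutivity}. Your explicit use of condition (ii) at the vertex $V=(1,0,\ldots,0)$ just makes precise the paper's one-line appeal to admissibility; one small point is that $\frac{1}{n-1}\in\Dint{0}{1}$ already holds for $n\ge 2$, and as you note later, $n>2$ is what makes the two hypotheses compatible.
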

	
	\begin{proof}
		If $\Phi(1)=0$, then
		\[
		Z_{\ell}^{\Phi}
		=
		(n-1)\Phi^{(\ell)}(0),
		\qquad \ell=1,2.
		\]
		Since $\Phi$ is admissible, these normalizing factors are positive.
		Thus
		\[
		\phi_0^{\Phi}
		=
		\frac{1}{n-1}\eone+\frac{1}{n-1}\etwo
		=
		\frac{1}{n-1}.
		\]
		The result follows from
		Theorem~\ref{thm:vertex-obstruction-involutivity}.
	\end{proof}
	
	The previous obstruction does not mean that involutive hyperbolic
	negators cannot exist. It only shows that they are not generally obtained
	from a fixed generator of the preceding type. In the real-valued setting,
	Batyrshin introduced a probability-distribution-dependent involutive
	negator based on the maximum and minimum levels of the distribution
	\cite{Batyrshin2021b}. This idea was later used as a starting point for
	parametric families of probability negators \cite{Batyrshin2023}. We now
	construct a hyperbolic involutive negator adapted to the idempotent
	structure. The construction reverses the list of distinct probability
	levels in each active idempotent component and then normalizes
	component-wise.
	
	\begin{example}
		Let $n>2$. Consider first the Yager hyperbolic negator $\Yager$. Since
		$\Phi_Y(1)=0$ and
		\[
		\Phi_Y\left(\frac{1}{n-1}\right)
		=
		1-\frac{1}{n-1}
		=
		\frac{n-2}{n-1}
		\neq 0,
		\]
		Corollary~\ref{cor:zero-at-one-noninvolutive} states that $\Yager$ is
		not involutive. This obstruction is already visible at a vertex. If
		$V=(1,0,\ldots,0)\in\PD(n)$, then
		\[
		\Yager(V)
		=
		\left(0,\frac1{n-1},\ldots,\frac1{n-1}\right).
		\]
		For $n>2$, the value $1/(n-1)$ is not sent back to $0$ by the same
		generator. Hence $\Yager^2(V)\neq V$. For instance, when $n=3$ one has
		\[
		(1,0,0)
		\longmapsto
		\left(0,\frac12,\frac12\right)
		\longmapsto
		\left(\frac12,\frac14,\frac14\right).
		\]
		
		We now construct an involutive hyperbolic negator
		$M:\PD(n)\to\PD(n)$. Let $P=(p_1,\ldots,p_n)\in\PD(n)$, and fix an
		active component $\ell\in\{1,2\}$. Let $0\le a_1^{(\ell)}<a_2^{(\ell)}<\cdots<a_{k_\ell}^{(\ell)}\le 1$ be the distinct real values appearing among 	$p_1^{(\ell)},\ldots,p_n^{(\ell)}$. Let $\nu_r^{(\ell)}$ denote the
		multiplicity of the level $a_r^{(\ell)}$. If $k_\ell=1$, then $P^{(\ell)}$ is uniform on that active component,
		and we set $M(P)^{(\ell)}=P^{(\ell)}$. If $k_\ell\ge2$, for $r=1,\ldots,k_\ell$, define 
		\[
		\mu_{P,\ell}\left(a_r^{(\ell)}\right)
		=
		a_{k_\ell+1-r}^{(\ell)}.
		\]
		Thus the smallest level is sent to the largest one, the second smallest
		level to the second largest one, and so on. Set
		\[
		\Gamma_\ell(P)
		=
		\sum_{j=1}^{n}\mu_{P,\ell}\left(p_j^{(\ell)}\right)
		=
		\sum_{r=1}^{k_\ell}
		\nu_r^{(\ell)}a_{k_\ell+1-r}^{(\ell)}.
		\]
		Then, for $j=1,\ldots,n$, define
		\[
		M(P)_j^{(\ell)}
		=
		\frac{\mu_{P,\ell}\left(p_j^{(\ell)}\right)}
		{\Gamma_\ell(P)}.
		\]
		Inactive components are kept equal to zero. Hence
		\[
		M(P)_j=M(P)_j^{(1)}\eone+M(P)_j^{(2)}\etwo.
		\]
		
		The order used in this construction is the usual real order inside each
		active idempotent component. Thus, for a fixed component $\ell$, the
		condition $p_i^{(\ell)}\le p_j^{(\ell)}$ gives  $\mu_{P,\ell}(p_i^{(\ell)}) \ge \mu_{P,\ell}(p_j^{(\ell)})$. If the hyperbolic entries $p_1,\ldots,p_n$ are mutually comparable with
		respect to $\preceqD$, then the real component-wise rankings are
		compatible with a single ordering of the hyperbolic coordinates. In
		that case, the preceding level reversal agrees with the ordering used
		to form the decreasing rearrangements in the hyperbolic majorization
		order.

		The map $M$ is a hyperbolic negator if $p_i\preceqD p_j$ implies
		$p_i^{(\ell)}\le p_j^{(\ell)}$ in each active component. Then, we have,
		\[
		\mu_{P,\ell}\left(p_i^{(\ell)}\right)
		\ge
		\mu_{P,\ell}\left(p_j^{(\ell)}\right),
		\]
		and division by the positive number $\Gamma_\ell(P)$ yields
		\[
		M(P)_i^{(\ell)}\ge M(P)_j^{(\ell)}.
		\]
		Therefore $M(P)_j\preceqD M(P)_i$.
		
		It remains to verify involutivity. Fix an active component $\ell$. If
		$p_j^{(\ell)}=a_r^{(\ell)}$, then
		\[
		M(P)_j^{(\ell)}
		=
		\frac{a_{k_\ell+1-r}^{(\ell)}}{\Gamma_\ell(P)}.
		\]
		Hence the distinct levels of $M(P)^{(\ell)}$, written increasingly, are
		\[
		\frac{a_1^{(\ell)}}{\Gamma_\ell(P)}
		<
		\frac{a_2^{(\ell)}}{\Gamma_\ell(P)}
		<
		\cdots
		<
		\frac{a_{k_\ell}^{(\ell)}}{\Gamma_\ell(P)}.
		\]
		The multiplicity of $a_r^{(\ell)}/\Gamma_\ell(P)$ is
		$\nu_{k_\ell+1-r}^{(\ell)}$. Therefore
		\[
		\Gamma_\ell(M(P))
		=
		\sum_{r=1}^{k_\ell}
		\nu_{k_\ell+1-r}^{(\ell)}
		\frac{a_{k_\ell+1-r}^{(\ell)}}{\Gamma_\ell(P)}
		=
		\frac{1}{\Gamma_\ell(P)}
		\sum_{r=1}^{k_\ell}
		\nu_r^{(\ell)}a_r^{(\ell)}
		=
		\frac{1}{\Gamma_\ell(P)}.
		\]
		Thus,
		\[
		M^2(P)_j^{(\ell)}
		=
		\frac{
			\mu_{M(P),\ell}
			\left(
			a_{k_\ell+1-r}^{(\ell)}/\Gamma_\ell(P)
			\right)
		}
		{\Gamma_\ell(M(P))} 
		=
		\frac{a_r^{(\ell)}/\Gamma_\ell(P)}
		{1/\Gamma_\ell(P)}
		=
		a_r^{(\ell)}
		=
		p_j^{(\ell)}.
		\]
		This holds in every active component, while inactive components remain
		zero. Consequently, $M^2(P)=P$.
		
		For the visualization in Figure~\ref{fig:involutive-vs-yager}, we take
		$n=3$ and use two initial hyperbolic-valued probability distributions
		\[
		\begin{aligned}
			P_1&=(0.80,0.15,0.05)\eone+(0.10,0.75,0.15)\etwo,\\
			P_2&=(0.10,0.80,0.10)\eone+(0.05,0.35,0.60)\etwo.
		\end{aligned}
		\]
		
		For each $P_r$, and for each active idempotent component, the figure
		shows the two-step trajectory
		\[
		P_r\longmapsto N(P_r)\longmapsto N^2(P_r),
		\]
		where $N$ denotes either $\Yager$ or $M$, depending on the row of the
		figure. The upper panels correspond to the Yager hyperbolic negator.
		In those panels, the second iterate does not return to the initial
		point. The lower panels correspond to the level-generated negator $M$.
		In those panels, the second arrow returns to the initial point,
		illustrating $M^2(P_r)=P_r$. The left column shows the first
		idempotent component, while the right column shows the second
		idempotent component.
				
		\begin{figure}[h]
			\centering
			\includegraphics[width=0.8\textwidth]{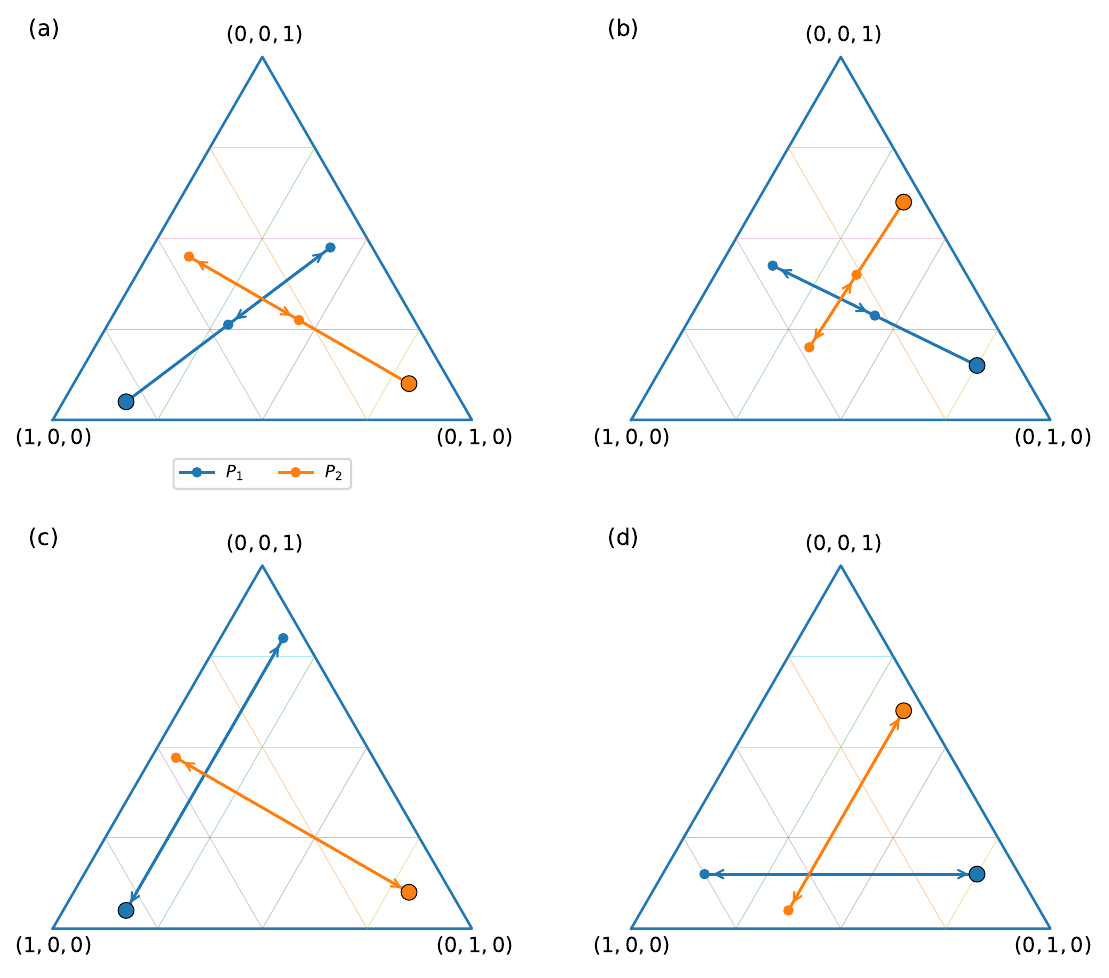}
			\caption{Two-step trajectories in the active components of $\PD(3)$.
				Panels (a) and (b) show the Yager hyperbolic negator. Panels (c)
				and (d) show the level-generated negator $M$. Panels (a) and (c)
				correspond to the first idempotent component, while panels (b)
				and (d) correspond to the second idempotent component. Each
				arrowed path represents $P\mapsto N(P)\mapsto N^2(P)$.}
			\label{fig:involutive-vs-yager}
		\end{figure}
		
		The action of $M$ can be read directly from the component-wise level
		reversal. For $P_1$, the first component has levels
		$0.05<0.15<0.80$, while the second component has levels
		$0.10<0.15<0.75$. In both components the normalization factor is equal
		to $1$. Hence
		\[
		\begin{aligned}
			M(P_1)
			&=
			(0.05,0.15,0.80)\eone
			+
			(0.75,0.10,0.15)\etwo,
			\\
			M^2(P_1)
			&=
			(0.80,0.15,0.05)\eone
			+
			(0.10,0.75,0.15)\etwo
			=
			P_1.
		\end{aligned}
		\]
		
		For $P_2$, the first component shows the role of normalization. Its
		levels are $0.10<0.80$, with multiplicities $2$ and $1$. Thus
		\[
		\Gamma_1(P_2)=2(0.80)+1(0.10)=1.70.
		\]
		The second component has levels $0.05<0.35<0.60$, and its normalization
		factor is equal to $1$. Therefore
		\[
		M(P_2)
		=
		\left(\frac{0.80}{1.70},\frac{0.10}{1.70},\frac{0.80}{1.70}\right)\eone
		+
		(0.60,0.35,0.05)\etwo.
		\]
		Applying $M$ once more gives
		\[
		M^2(P_2)
		=
		(0.10,0.80,0.10)\eone
		+
		(0.05,0.35,0.60)\etwo
		=
		P_2.
		\]
	\end{example}
	
	\section{Entropy increase and uniformization under hyperbolic negation}
	
	\label{sec:entropy-convergence}	
	
	The preceding section introduced the concept of hyperbolic negators. We now identify a structural condition under which a generated negator behaves as an uncertainty-increasing transformation. The key point is to prove that the original distribution majorizes its negation. Once this is established, entropy increase follows from Schur-concavity, and component-wise uniformization follows from a quantitative comparison with the Yager negator.
	
	We begin with a comparison principle. The Yager negator lies below the original distribution in the hyperbolic majorization order. Therefore, any negator that is further majorized by the Yager negator is automatically majorized by the original distribution.
	
	\begin{proposition}
		\label{prop:yager-intermediate-majorization}		
		Let $n\ge 2$. For every $P\in\PD(n)$, we have
		\[
		P\Dmaj \Yager(P).
		\]
		Moreover, if $\Neg:\PD(n)\to\PD(n)$ is a hyperbolic negator such
		that $\Yager(P)\Dmaj \Neg(P)$ for every $P\in\PD(n)$, then
		\[
		P\Dmaj \Neg(P).
		\]
	\end{proposition}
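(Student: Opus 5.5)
The argument is component-wise. By Definition~\ref{def:hyperbolic-majorization}, it suffices to prove $P^{(\ell)}\maj \Yager(P)^{(\ell)}$ for $\ell=1,2$. The total masses agree, since $\Yager$ is a hyperbolic probability transformation and so $s(\Yager(P))=s(P)$. If the $\ell$-th component is inactive, both $P^{(\ell)}$ and $\Yager(P)^{(\ell)}$ vanish identically and the relation is trivial. If it is active, Example~\ref{ex-yager-generator} gives $\Yager(P)^{(\ell)}=(1-p_1^{(\ell)},\ldots,1-p_n^{(\ell)})/(n-1)$, where $x=P^{(\ell)}\in\Delta_n$. The first claim therefore reduces to the real statement
\[
x\maj y,\qquad y_j=\frac{1-x_j}{n-1},
\]
for every real probability vector $x$.

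I will prove this real statement by exhibiting $y$ as $Dx$ for a doubly stochastic matrix $D$; Hardy--Littlewood--P\'olya \cite{Hardy1934,Marshall2011} then gives $x\maj y$. Take $D=\frac{1}{n-1}(J-I)$, where $J$ is the all-ones matrix. Its entries are non-negative for $n\ge2$, and each row and each column sums to $\frac{n-1}{n-1}=1$. Moreover, $(Dx)_j=\frac{1}{n-1}\bigl(\sum_m x_m-x_j\bigr)=\frac{1-x_j}{n-1}=y_j$.

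A self-contained alternative avoids the matrix argument. Because $t\mapsto (1-t)/(n-1)$ is decreasing, $y^\downarrow_j=(1-x^\downarrow_{n+1-j})/(n-1)$. Hence, for $1\le k\le n-1$,
\[
\sum_{j=1}^k y^\downarrow_j=\frac{k-T_k}{n-1},
\]
where $T_k$ is the sum of the $k$ smallest entries of $x$. Let $S_k$ be the sum of the $k$ largest entries. We need $S_k(n-1)\ge k-T_k$. This follows from two averaging inequalities, $S_k\ge k/n$ and $T_k\le k/n$, which are the same averaging argument used in Proposition~\ref{prop:vertices-and-uniform-majorization}. Indeed,
\[
S_k(n-1)+T_k\ \ge\ S_k(n-1)+\frac{k}{n}-\Bigl(\frac{k}{n}-T_k\Bigr)\cdot 0,
\]
so it suffices that $S_k(n-1)\ge k-\frac{k}{n}=\frac{k(n-1)}{n}$, and this holds because $S_k\ge k/n$.

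The second claim then follows from transitivity of $\Dmaj$, property (ii) listed after Definition~\ref{def:hyperbolic-majorization}. That property requires the three distributions $P$, $\Yager(P)$ and $\Neg(P)$ to share a common total mass. This holds because both $\Yager$ and $\Neg$ are hyperbolic probability transformations, and hence preserve $s(P)$. Combining $P\Dmaj\Yager(P)$ with the hypothesis $\Yager(P)\Dmaj\Neg(P)$ gives $P\Dmaj\Neg(P)$.

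The only substantive step is the real inequality $x\maj (1-x)/(n-1)$, and the doubly stochastic representation settles it directly. The remaining care points are bookkeeping: that inactive components are handled trivially, and that the common-total-mass hypothesis needed for transitivity is met.
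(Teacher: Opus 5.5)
Your main argument is correct, and it takes a different route from the paper. The paper stays inside the definition of majorization. With $x=(P^{(\ell)})^\downarrow$, it proves the entrywise bound $\frac{1-x_j}{n-1}\le\frac1k\sum_{i=1}^k x_i$ for every $j$ and every $k\le n-1$, and then sums this over the $k$ largest entries of $\Yager(P)^{(\ell)}$.

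You instead write $\Yager(P)^{(\ell)}=Dx$ with $D=\frac{1}{n-1}(J-I)$ doubly stochastic, and invoke the easy direction of Hardy--Littlewood--P\'olya. Your route is shorter and more structural. It identifies Yager negation as an averaging operator, and since $D$ acts as $-\frac{1}{n-1}$ on the sum-zero subspace, it also explains the contraction factor used later in Proposition~\ref{prop:quadratic-deviation-contraction}. The paper's route is self-contained and needs no external theorem. Your handling of inactive components and of transitivity (including the common total mass) matches the paper.

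Your ``self-contained alternative'' paragraph, however, is wrong as written and should be deleted or repaired. The target inequality is $(n-1)S_k+T_k\ge k$. Your displayed step $S_k(n-1)+T_k\ge S_k(n-1)+\frac kn$ is equivalent to $T_k\ge\frac kn$. That is the reverse of the averaging inequality $T_k\le\frac kn$ you had just stated; for $x=(1,0,\ldots,0)$ and $k=1$ it reads $0\ge\frac1n$. The two bounds you quote cannot be combined, because the bound on $T_k$ points the wrong way. Knowing only $S_k\ge\frac kn$ and $T_k\ge0$ gives $(n-1)S_k+T_k\ge\frac{(n-1)k}{n}$, which falls short of $k$.

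A correct elementary version goes as follows. For every $j$, $1-x_j\le 1-x^\downarrow_n=S_{n-1}$. Top-$k$ averages are non-increasing in $k$, so $\frac{1-x_j}{n-1}\le\frac{S_{n-1}}{n-1}\le\frac{S_k}{k}$ for $k\le n-1$. Summing any $k$ entries of $y$ therefore gives at most $S_k$. This is essentially the paper's argument in compressed form.
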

	
	\begin{proof}
		Fix an idempotent component $\ell\in\{1,2\}$. If
		$s^{(\ell)}(P)=0$, then both $P^{(\ell)}$ and
		$\Yager(P)^{(\ell)}$ are identically zero, and there is nothing to
		prove.
		
		Assume now that $s^{(\ell)}(P)=1$, and write $x=(P^{(\ell)})^\downarrow=(x_1,\ldots,x_n)$, with $x_1 \ge \cdots \ge x_n$.
		Since $P^{(\ell)}$ is a real probability distribution, one has
		$\sum_{i=1}^{n}x_i=1$. Moreover, because
		$t\mapsto (1-t)/(n-1)$ is decreasing,
		\[
		\left(\Yager(P)^{(\ell)}\right)^\downarrow
		=
		\left(
		\frac{1-x_n}{n-1},
		\frac{1-x_{n-1}}{n-1},
		\ldots,
		\frac{1-x_1}{n-1}
		\right).
		\]
		
		Let $k\in\{1,\ldots,n-1\}$ and $j\in\{1,\ldots,n\}$.
		For $j\le k$, using $x_j \ge x_k$ and
		$x_i\le x_k$ for $i=k+1,\ldots,n$, we get
		\[
		\sum_{i=k+1}^{n}x_i-x_j
		\le
		\sum_{i=k+1}^{n}x_k-x_k
		=
		(n-k-1)x_k.
		\]
		For $j>k$, using again $x_i\le x_k$ for
		$i=k+1,\ldots,n$, we get
		\[
		\sum_{i=k+1}^{n}x_i-x_j
		=
		\sum_{\substack{i=k+1\\ i\neq j}}^{n}x_i
		\le
		\sum_{\substack{i=k+1\\ i\neq j}}^{n}x_k
		=
		(n-k-1)x_k.
		\]
		Thus, in any case,
		\[
		\sum_{i=k+1}^{n}x_i-x_j
		\le
		(n-k-1)x_k.
		\]
		Therefore,
		\[
		\begin{aligned}
			\frac{1-x_j}{n-1}
			&=
			\frac{\sum_{i=1}^{k}x_i+\sum_{i=k+1}^{n}x_i-x_j}{n-1}
			\le
			\frac{\sum_{i=1}^{k}x_i+(n-k-1)x_k}{n-1}\\
			&=
			\frac{\sum_{i=1}^{k}(x_i-x_k)+(n-1)x_k}{n-1}\\
			&=
			\frac{
				k\sum_{i=1}^{k}(x_i-x_k)
				+k(n-1)x_k
				-(n-1)\sum_{i=1}^{k}x_i
				+(n-1)\sum_{i=1}^{k}x_i
			}{k(n-1)}\\
			&=
			\frac{
				k\sum_{i=1}^{k}(x_i-x_k)
				-(n-1)\sum_{i=1}^{k}(x_i-x_k)
				+(n-1)\sum_{i=1}^{k}x_i
			}{k(n-1)}\\
			&=
			\frac{
				-(n-1-k)\sum_{i=1}^{k}(x_i-x_k)
				+(n-1)\sum_{i=1}^{k}x_i
			}{k(n-1)}\\
			&\le
			\frac{(n-1)\sum_{i=1}^{k}x_i}{k(n-1)}=
			\frac1k\sum_{i=1}^{k}x_i.
		\end{aligned}
		\]
		Indeed, the last inequality follows from
		$n-1-k \ge 0$ and $\sum_{i=1}^{k}(x_i-x_k) \ge 0$.
		
		Now, using the decreasing rearrangement of
		$\Yager(P)^{(\ell)}$, we obtain
		\[
		\begin{aligned}
			\sum_{j=1}^{k}
			\left(\Yager(P)^{(\ell)}\right)^\downarrow_j
			=
			\sum_{j=n-k+1}^{n}\frac{1-x_j}{n-1}
			\le
			\sum_{j=n-k+1}^{n}
			\frac1k\sum_{i=1}^{k}x_i
			=
			\sum_{i=1}^{k}x_i
			=
			\sum_{i=1}^{k}(P^{(\ell)})^\downarrow_i .
		\end{aligned}
		\]
		Since $P^{(\ell)}$ and $\Yager(P)^{(\ell)}$ have the same total
		mass, it follows that
		\[
		P^{(\ell)}\maj \Yager(P)^{(\ell)}.
		\]
		
		Applying the argument to each active idempotent component gives
		$P\Dmaj\Yager(P)$.
		
		Now suppose that $\Neg:\PD(n)\to\PD(n)$ is a hyperbolic negator
		satisfying $\Yager(P)\Dmaj \Neg(P)$ for every $P\in\PD(n)$. Since
		hyperbolic majorization is transitive component-wise, we have
		\[
		P\Dmaj \Yager(P)\Dmaj \Neg(P).
		\]
		Therefore $P\Dmaj\Neg(P)$, as claimed.
	\end{proof}
	
	The following theorem provides a verifiable criterion for a generated negator satisfy this comparison principle. The quotient condition controls how the normalized weights produced by $\Phi$ compare with the affine weights of the Yager negator.
	
	\begin{theorem}
		\label{thm:generated-negator-yager-dominated}
		Let $n\ge 2$, and $\Phi:\Dint{0}{1}\to\Dpos$ be an admissible hyperbolic
		generator, and let $\Neg_{\Phi}$ be the generated hyperbolic
		negator. Suppose that, for every $P=(p_1,\ldots,p_n)\in\PD(n)$, every active
		idempotent component $\ell$, and all $i,j\in\{1,\ldots,n\}$,
		\[
		p_j^{(\ell)}=1
		\quad\Longrightarrow\quad
		\Phi^{(\ell)}(p_j)=0,
		\]
		and
		\[
		p_i^{(\ell)}\le p_j^{(\ell)}<1
		\quad\Longrightarrow\quad
		\Phi^{(\ell)}(p_i) \ge \Phi^{(\ell)}(p_j)
		\quad\text{and}\quad
		\frac{\Phi^{(\ell)}(p_i)}
		{1-p_i^{(\ell)}}
		\le
		\frac{\Phi^{(\ell)}(p_j)}
		{1-p_j^{(\ell)}}.
		\]
		Then, for every $P\in\PD(n)$, 
		\[
		\Yager(P)\Dmaj \Neg_{\Phi}(P), 
		\qquad \text{and} \qquad
		P\Dmaj \Neg_{\Phi}(P).
		\]
	\end{theorem}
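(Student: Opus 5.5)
The plan is to work one idempotent component at a time and compare $\Neg_{\Phi}(P)^{(\ell)}$ with $\Yager(P)^{(\ell)}$ by a ratio-monotonicity argument. The second conclusion, $P\Dmaj\Neg_{\Phi}(P)$, then follows from the first by Proposition~\ref{prop:yager-intermediate-majorization}.

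Fix $P\in\PD(n)$ and a component $\ell$. If $s^{(\ell)}(P)=0$, both $\ell$-th components vanish and there is nothing to prove. So assume $s^{(\ell)}(P)=1$ and write $x_j=p_j^{(\ell)}$, $y_j=(1-x_j)/(n-1)$ and $q_j=\Phi^{(\ell)}(p_j)/Z_\ell^{\Phi}(P)$. Both $y$ and $q$ are probability vectors.

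First I will note what the hypotheses give. Applying the displayed implication in both directions when $x_i=x_j<1$ shows $\Phi^{(\ell)}(p_i)=\Phi^{(\ell)}(p_j)$. Hence $q_j$ depends only on the level $x_j$, and ties in $x$ produce ties in $q$. Now reindex so that $x_1\le x_2\le\cdots\le x_n$. Then $y_1\ge\cdots\ge y_n$, and the monotonicity hypothesis on $\Phi^{(\ell)}$ gives $q_1\ge\cdots\ge q_n$. So a single ordering gives the decreasing rearrangements of both vectors simultaneously. The quotient hypothesis says that $r_j=q_j/y_j$ is nondecreasing in $j$ on the indices with $x_j<1$, since the constant factor $(n-1)/Z_\ell^{\Phi}(P)$ is positive.

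The degenerate case is an entry with $x_j=1$, which forces $P^{(\ell)}=\delta_\alpha$. This index is last in the ordering, and by the first hypothesis $y_n=q_n=0$. So I can discard it and run the argument on the indices with $y_j>0$.

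The key step is the elementary lemma: if $y$ has positive entries, $\sum_j q_j=\sum_j y_j$, and $r_j=q_j/y_j$ is nondecreasing, then $S_k=\sum_{i\le k}(q_i-y_i)\le 0$ for every $k$. To prove it, write $S_k=\sum_{i\le k}(r_i-1)y_i$. If $r_k\le1$, every term in this sum is nonpositive. If $r_k>1$, then $r_i>1$ for all $i>k$, so $S_k=-\sum_{i>k}(r_i-1)y_i\le0$ by the equal-total condition. Since the indices $1,\dots,k$ carry the $k$ largest entries of both vectors, $S_k\le0$ says exactly
\[
\sum_{i\le k}q^\downarrow_i\le\sum_{i\le k}y^\downarrow_i .
\]
With equal total mass, this gives $\Yager(P)^{(\ell)}\maj\Neg_{\Phi}(P)^{(\ell)}$.

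Doing this in both components gives $\Yager(P)\Dmaj\Neg_{\Phi}(P)$. Since $\Neg_{\Phi}$ is a hyperbolic negator by Proposition~\ref{prop:generated-transformation-is-negator}, the second part of Proposition~\ref{prop:yager-intermediate-majorization} then yields $P\Dmaj\Neg_{\Phi}(P)$. I expect the main obstacle to be the bookkeeping rather than the inequality. Two points need care: $\Phi^{(\ell)}$ is a function of the full hyperbolic entry $p_j$ and not of $p_j^{(\ell)}$ alone, which is why the tie argument above is needed to get common sorted orders; and the zero entries coming from $x_j=1$ must be handled separately.
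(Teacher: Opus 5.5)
Your proof is correct. Its skeleton matches the paper's: fix an active component and sort by increasing $p_j^{(\ell)}$, so that $\Yager(P)^{(\ell)}$ and $\Neg_\Phi(P)^{(\ell)}$ are simultaneously in decreasing order. Then prove domination of partial sums and conclude with Proposition~\ref{prop:yager-intermediate-majorization}.

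Where you differ is the key inequality. The paper cross-multiplies the quotient condition into $(1-p_i^{(\ell)})\Phi^{(\ell)}(p_j)-\Phi^{(\ell)}(p_i)(1-p_j^{(\ell)})\ge 0$ for $i\le k<j$ and sums over all such pairs. The resulting product identity gives $\sum_{i\le k}\Phi^{(\ell)}(p_i)/Z_\ell^{\Phi}(P)\le\sum_{i\le k}(1-p_i^{(\ell)})/(n-1)$ directly. Since nothing is divided by $1-p_j^{(\ell)}$, the case $p_j^{(\ell)}=1$ is absorbed inline (that cross term is simply $0$), and no reduction step is needed.

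Your single-crossing argument with $r_j=q_j/y_j$ needs $y_j>0$, which is why you must first discard the index with $p_j^{(\ell)}=1$. In return it uses less. In place of monotonicity of $r_j$, it only requires that $\{j: r_j>1\}$ be a terminal segment of the ordering, together with the ordering of $q$ you already use. So it proves the comparison under a formally weaker hypothesis than the pairwise quotient inequalities the paper's double sum consumes term by term.

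Your separate tie argument is harmless but not needed. The monotonicity hypothesis is stated for $p_i^{(\ell)}\le p_j^{(\ell)}$, so it already covers tied indices in whichever order they are listed.
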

	
	\begin{proof}
		Fix $P=(p_1,\ldots,p_n)\in\PD(n)$, and set
		$Q=\Neg_{\Phi}(P)$. We first prove that
		$\Yager(P)\Dmaj Q$.
		
		Let $\ell$ be an active idempotent component. Reorder the indices so
		that 
		$
		p_1^{(\ell)}\le p_2^{(\ell)}\le\cdots\le p_n^{(\ell)}.
		$
		Since the component is active, $\sum_{j=1}^{n}p_j^{(\ell)}=1$, and
		therefore
		\[
		\sum_{j=1}^{n}\left(1-p_j^{(\ell)}\right)=n-1.
		\]
		With this ordering, we have
		\[
		\left(\Yager(P)^{(\ell)}\right)^\downarrow
		=
		\left(
		\frac{1-p_1^{(\ell)}}{n-1},
		\ldots,
		\frac{1-p_n^{(\ell)}}{n-1}
		\right),
		\quad \text{and} \quad
		\left(Q^{(\ell)}\right)^\downarrow
		=
		\left(
		\frac{\Phi^{(\ell)}(p_1)}{Z_{\ell}^{\Phi}(P)},
		\ldots,
		\frac{\Phi^{(\ell)}(p_n)}{Z_{\ell}^{\Phi}(P)}
		\right).
		\]
		
		For $i\le k<j$, we have $p_i^{(\ell)}\le p_j^{(\ell)}$. If
		$p_j^{(\ell)}<1$, the quotient condition gives
		\[
		\frac{\Phi^{(\ell)}(p_i)}
		{1-p_i^{(\ell)}}
		\le
		\frac{\Phi^{(\ell)}(p_j)}
		{1-p_j^{(\ell)}}.
		\]
		Equivalently,
		\[
		\left(1-p_i^{(\ell)}\right)\Phi^{(\ell)}(p_j)
		-
		\Phi^{(\ell)}(p_i)\left(1-p_j^{(\ell)}\right)
		\ge 0.
		\]
		If $p_j^{(\ell)}=1$, then the boundary condition gives
		$\Phi^{(\ell)}(p_j)=0$, and the same expression is equal to $0$.
		Thus, for all $i\le k<j$,
		\[
		\left(1-p_i^{(\ell)}\right)\Phi^{(\ell)}(p_j)
		-
		\Phi^{(\ell)}(p_i)\left(1-p_j^{(\ell)}\right)
		\ge 0.
		\]
		Summing over $i=1,\ldots,k$ and $j=k+1,\ldots,n$, we obtain
		\[
		\begin{aligned}
			0
			&\le
			\sum_{i=1}^{k}\sum_{j=k+1}^{n}
			\left[
			\left(1-p_i^{(\ell)}\right)\Phi^{(\ell)}(p_j)
			-
			\Phi^{(\ell)}(p_i)\left(1-p_j^{(\ell)}\right)
			\right]\\
			&=
			\left(\sum_{i=1}^{k}\left(1-p_i^{(\ell)}\right)\right)
			\left(\sum_{j=k+1}^{n}\Phi^{(\ell)}(p_j)\right)
			-
			\left(\sum_{i=1}^{k}\Phi^{(\ell)}(p_i)\right)
			\left(\sum_{j=k+1}^{n}\left(1-p_j^{(\ell)}\right)\right)\\
			&=
			\left(\sum_{i=1}^{k}\left(1-p_i^{(\ell)}\right)\right)
			\left(\sum_{j=1}^{n}\Phi^{(\ell)}(p_j)\right)
			-
			\left(\sum_{i=1}^{k}\Phi^{(\ell)}(p_i)\right)
			\left(\sum_{j=1}^{n}\left(1-p_j^{(\ell)}\right)\right).
		\end{aligned}
		\]
		Since the component is active,
		\[
		\sum_{j=1}^{n}\left(1-p_j^{(\ell)}\right)=n-1.
		\]
		Moreover, the normalizing denominator
		$\sum_{j=1}^{n}\Phi^{(\ell)}(p_j)$ is positive. Hence the previous
		inequality gives
		\[
		\frac{\sum_{i=1}^{k}\Phi^{(\ell)}(p_i)}
		{\sum_{j=1}^{n}\Phi^{(\ell)}(p_j)}
		\le
		\frac{\sum_{i=1}^{k}\left(1-p_i^{(\ell)}\right)}
		{n-1}.
		\]
		
		Therefore,
		\[
		\sum_{i=1}^{k}
		\left(Q^{(\ell)}\right)^\downarrow_i
		\le
		\sum_{i=1}^{k}
		\left(\Yager(P)^{(\ell)}\right)^\downarrow_i,
		\qquad
		k=1,\ldots,n-1.
		\]
		Since $Q^{(\ell)}$ and $\Yager(P)^{(\ell)}$ have the same total
		mass, it follows that
		\[
		\Yager(P)^{(\ell)}\maj Q^{(\ell)}.
		\]
		Inactive components are identically zero, so
		\[
		\Yager(P)\Dmaj Q=\Neg_{\Phi}(P).
		\]
		
		Finally, by Proposition~\ref{prop:yager-intermediate-majorization}, we have $P\Dmaj\Neg_{\Phi}(P)$.
	\end{proof}
	
	The quotient condition in the previous theorem is abstract, but it is satisfied by a useful and natural class of component-wise generators. Concavity is the mechanism that forces the required monotonicity of the quotient.
	
	\begin{corollary}
		\label{cor:separable-concave-yager-dominated}
		Let $n\ge 2$, and $\Phi:\Dint{0}{1}\to\Dpos$ be an admissible hyperbolic
		generator of the form
		\[
		\Phi(z)=\varphi_1(z^{(1)})\eone+\varphi_2(z^{(2)})\etwo,
		\]
		where each $\varphi_\ell:[0,1]\to[0,\infty)$ is non-increasing,
		concave, and satisfies $\varphi_\ell(1)=0$. Then, for every
		$P\in\PD(n)$,
		\[
		\Yager(P)\Dmaj \Neg_{\Phi}(P), 
		\qquad \text{and} \qquad
		P\Dmaj \Neg_{\Phi}(P).
		\]
	\end{corollary}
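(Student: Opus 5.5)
The plan is to verify the two hypotheses of Theorem~\ref{thm:generated-negator-yager-dominated} for the separable generator $\Phi(z)=\varphi_1(z^{(1)})\eone+\varphi_2(z^{(2)})\etwo$. Both conclusions then follow directly from that theorem, with Proposition~\ref{prop:yager-intermediate-majorization} already built into it. Since $\Phi^{(\ell)}(p_j)=\varphi_\ell(p_j^{(\ell)})$ depends only on the $\ell$-th idempotent coordinate, every condition in the theorem becomes a statement about the real function $\varphi_\ell$ on $[0,1]$. The boundary condition is immediate: if $p_j^{(\ell)}=1$, then $\Phi^{(\ell)}(p_j)=\varphi_\ell(1)=0$. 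The first part of the monotonicity condition is also immediate. If $p_i^{(\ell)}\le p_j^{(\ell)}$, then $\varphi_\ell(p_i^{(\ell)})\ge\varphi_\ell(p_j^{(\ell)})$ because $\varphi_\ell$ is non-increasing.

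The only substantive step is the quotient condition. It amounts to showing that
\[
g_\ell(t)=\frac{\varphi_\ell(t)}{1-t}
\]
is non-decreasing on $[0,1)$. I would use $\varphi_\ell(1)=0$ to rewrite this as
\[
g_\ell(t)=\frac{\varphi_\ell(t)-\varphi_\ell(1)}{t-1}\cdot(-1)
=-\,\frac{\varphi_\ell(1)-\varphi_\ell(t)}{1-t},
\]
which is minus the slope of the chord of $\varphi_\ell$ joining $t$ and $1$. For a concave function, chord slopes from a variable point $t$ to the fixed endpoint $1$ are non-increasing in $t$; this is the standard three-chord (slope) lemma. So $-$slope, that is $g_\ell$, is non-decreasing.

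Concretely, take $0\le s\le t<1$ and write $t=\lambda s+(1-\lambda)\cdot 1$ with $\lambda=(1-t)/(1-s)\in(0,1]$. Concavity gives $\varphi_\ell(t)\ge\lambda\varphi_\ell(s)+(1-\lambda)\varphi_\ell(1)=\frac{1-t}{1-s}\varphi_\ell(s)$. Dividing by $1-t>0$ yields $g_\ell(s)\le g_\ell(t)$, which is exactly the required inequality with $s=p_i^{(\ell)}$ and $t=p_j^{(\ell)}$. This one-line convex-combination argument is the step I expect to carry the weight. The only care needed is to stay inside $[0,1)$ so the denominators are positive, and the case $p_j^{(\ell)}=1$ is excluded from the quotient condition by hypothesis.

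Finally, I would note that admissibility, meaning order reversal of $\Phi$ on $\Dint{0}{1}$ and positivity of $Z_\ell^{\Phi}(P)$ on active components, is assumed in the statement. Order reversal also follows componentwise from the monotonicity of each $\varphi_\ell$. With all hypotheses of Theorem~\ref{thm:generated-negator-yager-dominated} checked for each active component $\ell$, that theorem gives $\Yager(P)\Dmaj\Neg_{\Phi}(P)$ and $P\Dmaj\Neg_{\Phi}(P)$ for every $P\in\PD(n)$.
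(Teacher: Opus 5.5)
Your proposal is correct and follows the paper's proof. Both verify the boundary, monotonicity, and quotient hypotheses of Theorem~\ref{thm:generated-negator-yager-dominated}, and the paper handles the quotient step via $\psi_\ell(t)=\varphi_\ell(1-t)$ and the stated fact $\psi_\ell(v)/v\le\psi_\ell(u)/u$ for $0<u\le v$, which is exactly what your convex-combination inequality $\varphi_\ell(t)\ge\frac{1-t}{1-s}\,\varphi_\ell(s)$ proves directly.
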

	
	\begin{proof}
		Fix $P=(p_1,\ldots,p_n)\in\PD(n)$, an active idempotent component
		$\ell$, and indices $i,j\in\{1,\ldots,n\}$. Thus, $\Phi^{(\ell)}(p_j)=\varphi_\ell(p_j^{(\ell)})$.
		
		If $p_j^{(\ell)}=1$, then
		\[
		\Phi^{(\ell)}(p_j)=\varphi_\ell(1)=0.
		\]
		Now suppose that $p_i^{(\ell)}\le p_j^{(\ell)}<1$. Since
		$\varphi_\ell$ is non-increasing,
		\[
		\Phi^{(\ell)}(p_i)
		=
		\varphi_\ell(p_i^{(\ell)})
		\ge 
		\varphi_\ell(p_j^{(\ell)})
		=
		\Phi^{(\ell)}(p_j).
		\]
		
		It remains to verify the quotient condition. Define
		\[
		\psi_\ell(t)=\varphi_\ell(1-t),
		\qquad 0\le t\le1.
		\]
		Then $\psi_\ell$ is concave and satisfies $\psi_\ell(0)=0$. Hence,
		for $0<u\le v$,
		\[
		\frac{\psi_\ell(v)}{v}
		\le
		\frac{\psi_\ell(u)}{u}.
		\]
		Taking $u=1-p_j^{(\ell)}$ and $v=1-p_i^{(\ell)}$, we obtain
		\[
		\frac{\varphi_\ell(p_i^{(\ell)})}{1-p_i^{(\ell)}}
		\le
		\frac{\varphi_\ell(p_j^{(\ell)})}{1-p_j^{(\ell)}}.
		\]
		Thus the hypotheses of
		Theorem~\ref{thm:generated-negator-yager-dominated} are satisfied and the results follows.
	\end{proof}	
	
	We can now translate the majorization comparison into entropy. Since both entropy functionals are Schur-concave in each active component, the relation $P\Dmaj \Neg_\Phi(P)$ implies that the generated negation cannot decrease uncertainty.
	
	\begin{theorem}
		\label{thm:generated-negator-increases-entropy}
		Let $n\ge 2$, and $\Phi:\Dint{0}{1}\to\Dpos$ be an admissible hyperbolic
		generator satisfying the hypotheses of
		Theorem~\ref{thm:generated-negator-yager-dominated}. Then, for every
		$P\in\PD(n)$,
		\[
		\HD(P)\preceqD \HD(\Neg_\Phi(P)),
		\qquad \text{and} \qquad
		\GiniD(P)\preceqD \GiniD(\Neg_\Phi(P)).
		\]
	\end{theorem}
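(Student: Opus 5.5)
The plan is to chain two results already established in the paper. Let $P\in\PD(n)$ be arbitrary and set $Q=\Neg_\Phi(P)$. First I will check that $Q\in\PD(n)$ and $s(Q)=s(P)$. Both follow from Proposition~\ref{prop:generated-transformation-is-negator}, which shows that $\Neg_\Phi$ is a hyperbolic negator and, in particular, a hyperbolic probability transformation that preserves total mass. This settles the hypothesis $s(P)=s(Q)$ that Definition~\ref{def:hyperbolic-majorization} and Proposition~\ref{prop:entropy-majorization-prelim} both require.

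Next I will invoke Theorem~\ref{thm:generated-negator-yager-dominated}, whose hypotheses $\Phi$ satisfies by assumption, to obtain $P\Dmaj \Neg_\Phi(P)$. Applying Proposition~\ref{prop:entropy-majorization-prelim} to the pair $(P,Q)$ then gives $\HD(P)\preceqD\HD(Q)$ and $\GiniD(P)\preceqD\GiniD(Q)$, which is the claim.

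The argument is essentially a two-line composition, and I expect no real obstacle. The one point I would make explicit is the treatment of inactive components. When $s^{(\ell)}(P)=0$, Definition~\ref{def:generated-hyperbolic-transformation} forces $Q^{(\ell)}=0$. Both entropy components then vanish: with the convention $0\log 0=0$ for $\HD$, and with $s^{(\ell)}(P)=0$ for $\GiniD$. The corresponding componentwise inequalities therefore hold with equality, consistent with the proof of Proposition~\ref{prop:entropy-majorization-prelim}.

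If desired, I would close by noting a direct consequence. Iterating the theorem yields a $\preceqD$-nondecreasing sequence of entropies along $\Neg_\Phi^m(P)$, and by Corollary~\ref{cor:entropy-bounds} this sequence is bounded above by the entropies of $U_n^{s(P)}$.
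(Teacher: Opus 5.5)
Your proposal is correct and is essentially the paper's proof: the paper also obtains $P\Dmaj\Neg_\Phi(P)$ from Theorem~\ref{thm:generated-negator-yager-dominated} and then applies Proposition~\ref{prop:entropy-majorization-prelim}. Your explicit checks of mass preservation (via Proposition~\ref{prop:generated-transformation-is-negator}) and of inactive components are sound; the paper leaves them implicit in those cited results.
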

	
	\begin{proof}
		By Theorem~\ref{thm:generated-negator-yager-dominated}, one has
		$P\Dmaj \Neg_\Phi(P)$ for every $P\in\PD(n)$. Proposition~\ref{prop:entropy-majorization-prelim} gives
		\[	
		\HD(P)\preceqD \HD(\Neg_\Phi(P)),
		\qquad
		\GiniD(P)\preceqD \GiniD(\Neg_\Phi(P)).
		\]
	\end{proof}
	
	The comparison principle also gives a quantitative uniformization mechanism.
	For each distribution we measure the component-wise deviation from the
	corresponding uniform distribution by
	\[
	\mathcal Q(P)
	=
	\sum_{j=1}^{n}
	\left(
	p_j^{(1)}
	-
	\frac{s^{(1)}(P)}{n}
	\right)^2\eone
	+
	\sum_{j=1}^{n}
	\left(
	p_j^{(2)}
	-
	\frac{s^{(2)}(P)}{n}
	\right)^2\etwo
	=
	\mathcal Q^{(1)}(P)\eone
	+
	\mathcal Q^{(2)}(P)\etwo.
	\]
	For fixed total mass, each component $\mathcal Q^{(\ell)}$ is Schur-convex,
	because it is obtained by summing the convex function
	$x\mapsto (x-c)^2$ over the entries, with $c$ fixed by the total mass.
	
	\begin{proposition}
		\label{prop:quadratic-deviation-contraction}
		Let $n>2$, and let $\Phi:\Dint{0}{1}\to\Dpos$ be an admissible
		hyperbolic generator satisfying the hypotheses of
		Theorem~\ref{thm:generated-negator-yager-dominated}. For
		$P\in\PD(n)$, define $P_m=\Neg_\Phi^m(P)$. Then, for every $m\ge0$,
		\[
		\mathcal Q(P_m)
		\preceqD
		\frac{1}{(n-1)^{2m}}\mathcal Q(P).
		\]
		In particular, for every $j=1,\ldots,n$ and every $\ell\in\{1,2\}$,
		\[
		\left(
		p_{m,j}^{(\ell)}
		-
		\frac{s^{(\ell)}(P)}{n}
		\right)^2
		\le
		\frac{1}{(n-1)^{2m}}
		\mathcal Q^{(\ell)}(P).
		\]
		Moreover,
		\[
		\Ddist\!\left(P_m,U_n^{s(P)}\right)^2
		\le
		\frac{1}{(n-1)^{2m}}
		\sqrt{\mathcal Q^{(1)}(P)\mathcal Q^{(2)}(P)}.
		\]
	\end{proposition}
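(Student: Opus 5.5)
The plan is to prove a one-step contraction
\[
\mathcal Q(\Neg_\Phi(R))\preceqD \frac{1}{(n-1)^2}\,\mathcal Q(R)
\qquad\text{for every } R\in\PD(n),
\]
and then iterate it. Throughout we use that $\Neg_\Phi$ preserves total mass, so $s(P_m)=s(P)$ for all $m$, and the centering constants $s^{(\ell)}(P)/n$ in $\mathcal Q$ stay fixed along the orbit. Inactive components are identically zero and have $\mathcal Q^{(\ell)}=0$, so it suffices to fix an active component $\ell$, in which $R^{(\ell)}$ is a real probability vector.

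\emph{Step 1: the Yager negator contracts $\mathcal Q$ exactly.} By the explicit formula in Example~\ref{ex-yager-generator}, each active entry of $\Yager(R)$ is $(1-r_j^{(\ell)})/(n-1)$. A direct computation gives
\[
\frac{1-r_j^{(\ell)}}{n-1}-\frac1n=-\frac{1}{n-1}\left(r_j^{(\ell)}-\frac1n\right).
\]
Squaring and summing over $j$ yields the identity $\mathcal Q^{(\ell)}(\Yager(R))=(n-1)^{-2}\mathcal Q^{(\ell)}(R)$.

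\emph{Step 2: pass from $\Yager$ to $\Neg_\Phi$.} Theorem~\ref{thm:generated-negator-yager-dominated} gives $\Yager(R)\Dmaj\Neg_\Phi(R)$. As noted just before the statement, $\mathcal Q^{(\ell)}$ is Schur-convex on vectors with a fixed total mass. Since $\Yager(R)^{(\ell)}$ and $\Neg_\Phi(R)^{(\ell)}$ both sum to $1$, Schur-convexity gives
\[
\mathcal Q^{(\ell)}(\Neg_\Phi(R))\le \mathcal Q^{(\ell)}(\Yager(R))=(n-1)^{-2}\mathcal Q^{(\ell)}(R).
\]
This proves the one-step contraction componentwise, hence in the order $\preceqD$. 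Induction on $m$, applied with $R=P_{m-1}$, then gives $\mathcal Q(P_m)\preceqD (n-1)^{-2m}\mathcal Q(P)$.

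\emph{Step 3: the two consequences.} The entrywise bound is immediate, because a single nonnegative summand of $\mathcal Q^{(\ell)}(P_m)$ is at most the whole sum. For the separation, write
\[
\Ddist(P_m,U_n^{s(P)})^2=\sum_j |a_j b_j|,
\qquad a_j=p_{m,j}^{(1)}-\frac{s^{(1)}(P)}{n},\quad b_j=p_{m,j}^{(2)}-\frac{s^{(2)}(P)}{n}.
\]
Cauchy--Schwarz bounds this by $\sqrt{\mathcal Q^{(1)}(P_m)\,\mathcal Q^{(2)}(P_m)}$. Applying the contraction in each component gives the stated factor $(n-1)^{-2m}$.

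I expect no step to be a serious obstacle. The one point needing care is in Step 2, which relies on the Schur-convexity of $\mathcal Q^{(\ell)}$ under majorization with equal sums; the equal-sum requirement holds because of the common mass $s(P)$. The hypothesis $n>2$ only matters for the bound to be a genuine contraction; for $n=2$ the factor is $1$.
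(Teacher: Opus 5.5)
Your proposal is correct and follows the paper's proof essentially step for step. The common route is: the exact Yager identity $\mathcal Q^{(\ell)}(\Yager(R))=(n-1)^{-2}\mathcal Q^{(\ell)}(R)$; then Schur-convexity of $\mathcal Q^{(\ell)}$ combined with $\Yager(R)\Dmaj\Neg_\Phi(R)$ from Theorem~\ref{thm:generated-negator-yager-dominated}, giving the one-step contraction; then induction using mass preservation; and finally the single-summand bound and Cauchy--Schwarz for the two consequences.
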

	
	\begin{proof}
		We first establish a one-step estimate. Let $R\in\PD(n)$ and set
		$S=\Neg_\Phi(R)$. By Theorem~\ref{thm:generated-negator-yager-dominated}, we have $\Yager(R)\Dmaj S$.
		Both $\Yager$ and $\Neg_\Phi$ preserve total mass, so
		$s(\Yager(R))=s(S)=s(R)$.
		
		Fix an idempotent component $\ell$. If this component is inactive, then $\mathcal Q^{(\ell)}(S)=\mathcal Q^{(\ell)}(R)=0$.
		
		Suppose now that it is active. Since $\Yager(R)^{(\ell)}\maj S^{(\ell)}$, and both vectors have the same total mass, Schur-convexity of
		$\mathcal Q^{(\ell)}$ gives
		\[
		\mathcal Q^{(\ell)}(S)
		\le
		\mathcal Q^{(\ell)}(\Yager(R)).
		\]
		For the Yager hyperbolic negator, every active component satisfies
		\[
		\Yager(R)_j^{(\ell)}
		-
		\frac{s^{(\ell)}(R)}{n}
		=
		-\frac{1}{n-1}
		\left(
		r_j^{(\ell)}
		-
		\frac{s^{(\ell)}(R)}{n}
		\right).
		\]
		Therefore
		\[
		\mathcal Q^{(\ell)}(\Neg_\Phi(R))
		\le
		\mathcal Q^{(\ell)}(\Yager(R))
		=
		\frac{1}{(n-1)^2}
		\mathcal Q^{(\ell)}(R).
		\]
		
		We now apply induction on $m$. For $m=0$, the desired estimate is an
		equality. Assume that the estimate has been obtained for all indices up
		to $m$. Applying the one-step estimate to $R=P_m$ gives
		\[
		\mathcal Q^{(\ell)}(P_{m+1})
		\le
		\frac{1}{(n-1)^2}
		\mathcal Q^{(\ell)}(P_m).
		\]
		Using the induction hypothesis,
		\[
		\mathcal Q^{(\ell)}(P_{m+1})
		\le
		\frac{1}{(n-1)^2}
		\frac{1}{(n-1)^{2m}}
		\mathcal Q^{(\ell)}(P)
		=
		\frac{1}{(n-1)^{2(m+1)}}
		\mathcal Q^{(\ell)}(P).
		\]
		Thus, for every $m\ge0$ and every $\ell\in\{1,2\}$,
		\[
		\mathcal Q^{(\ell)}(P_m)
		\le
		\frac{1}{(n-1)^{2m}}
		\mathcal Q^{(\ell)}(P),
		\]
		which is equivalent to
		\[
		\mathcal Q(P_m)
		\preceqD
		\frac{1}{(n-1)^{2m}}\mathcal Q(P).
		\]
		
		Each squared deviation is bounded above by the sum of all squared
		deviations in the same component. Hence
		\[
		\left(
		p_{m,j}^{(\ell)}
		-
		\frac{s^{(\ell)}(P)}{n}
		\right)^2
		\le
		\mathcal Q^{(\ell)}(P_m)
		\le
		\frac{1}{(n-1)^{2m}}
		\mathcal Q^{(\ell)}(P).
		\]
		
		Finally, by the definition of $\Ddist$ and the Cauchy--Schwarz inequality,
		\[
		\begin{aligned}
			\Ddist\!\left(P_m,U_n^{s(P)}\right)^2
			&=
			\sum_{j=1}^{n}
			\left|
			\left(
			p_{m,j}^{(1)}
			-
			\frac{s^{(1)}(P)}{n}
			\right)
			\left(
			p_{m,j}^{(2)}
			-
			\frac{s^{(2)}(P)}{n}
			\right)
			\right|\\
			&\le
			\sqrt{
				\mathcal Q^{(1)}(P_m)
				\mathcal Q^{(2)}(P_m)
			}\\
			&\le
			\frac{1}{(n-1)^{2m}}
			\sqrt{
				\mathcal Q^{(1)}(P)
				\mathcal Q^{(2)}(P)
			}.
		\end{aligned}
		\]
	\end{proof}
	
	\begin{theorem}
		\label{thm:generated-negator-converges-to-uniform}
		Let $n>2$, and let $\Phi:\Dint{0}{1}\to\Dpos$ be an admissible
		hyperbolic generator satisfying the hypotheses of
		Theorem~\ref{thm:generated-negator-yager-dominated}. For
		$P\in\PD(n)$, define $P_m=\Neg_\Phi^m(P)$. Then
		\[
		\lim_{m\to\infty}p_{m,j}^{(\ell)}
		=
		\frac{s^{(\ell)}(P)}{n}
		\]
		for every $j=1,\ldots,n$ and every $\ell\in\{1,2\}$.
	\end{theorem}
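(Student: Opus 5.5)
This theorem is a direct consequence of Proposition~\ref{prop:quadratic-deviation-contraction}, so the plan is to read off the pointwise estimate proved there and let $m\to\infty$. First we fix $P\in\PD(n)$, an index $j\in\{1,\ldots,n\}$ and a component $\ell\in\{1,2\}$. We also observe that $s(P_m)=s(P)$ for every $m$, because $\Neg_\Phi$ is a hyperbolic probability transformation (Proposition~\ref{prop:generated-transformation-is-negator}). Hence the target value $s^{(\ell)}(P)/n$ is the same at every step of the iteration.

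If the $\ell$-th component is inactive, then $s^{(\ell)}(P)=0$. Inactive components stay identically zero under $\Neg_\Phi$ by Definition~\ref{def:generated-hyperbolic-transformation}, so $p_{m,j}^{(\ell)}=0=s^{(\ell)}(P)/n$ for all $m$, and the limit is trivial. This case is also covered by the general bound below, since then $\mathcal Q^{(\ell)}(P)=0$.

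If the $\ell$-th component is active, Proposition~\ref{prop:quadratic-deviation-contraction} gives
\[
\left|p_{m,j}^{(\ell)}-\frac{s^{(\ell)}(P)}{n}\right|
\le
\frac{1}{(n-1)^{m}}\sqrt{\mathcal Q^{(\ell)}(P)} .
\]
The quantity $\mathcal Q^{(\ell)}(P)$ is a finite constant independent of $m$; in fact it is at most $1$, since the entries of an active component lie in $[0,1]$ and sum to $1$. Because $n>2$, we have $n-1\ge 2$, so the right-hand side tends to $0$ geometrically. The convergence of $p_{m,j}^{(\ell)}$ to $s^{(\ell)}(P)/n$ then follows, and in fact at the rate $(n-1)^{-m}$.

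There is no real obstacle here; all the substantive work is already in the one-step contraction $\mathcal Q^{(\ell)}(\Neg_\Phi(R))\le (n-1)^{-2}\mathcal Q^{(\ell)}(R)$. The one point that needs care is the hypothesis $n>2$. For $n=2$ the contraction factor is $1$, so the argument gives no decay; indeed Yager's negator with $n=2$ merely swaps the two entries and does not converge. The plan should therefore make explicit that $n-1>1$ is exactly what is used.
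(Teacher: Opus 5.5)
Your proposal is correct and follows essentially the same route as the paper: take the square root of the pointwise estimate from Proposition~\ref{prop:quadratic-deviation-contraction}, then let $m\to\infty$ using $n-1>1$ and the squeeze theorem. Your separate treatment of inactive components and your bound $\mathcal Q^{(\ell)}(P)\le 1$ are harmless extras that the paper's argument does not need.
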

	
	\begin{proof}
		Fix an index $j$ and an idempotent component $\ell$. By
		Proposition~\ref{prop:quadratic-deviation-contraction},
		\[
		\left(
		p_{m,j}^{(\ell)}
		-
		\frac{s^{(\ell)}(P)}{n}
		\right)^2
		\le
		\frac{1}{(n-1)^{2m}}
		\mathcal Q^{(\ell)}(P).
		\]
		Therefore
		\[
		0
		\le
		\left|
		p_{m,j}^{(\ell)}
		-
		\frac{s^{(\ell)}(P)}{n}
		\right|
		\le
		\frac{1}{(n-1)^m}
		\sqrt{\mathcal Q^{(\ell)}(P)}.
		\]
		Since $n>2$, the factor $1/(n-1)$ is strictly smaller than $1$, and hence
		\[
		\lim_{m\to\infty}
		\frac{1}{(n-1)^m}
		\sqrt{\mathcal Q^{(\ell)}(P)}
		=
		0.
		\]
		By the squeeze theorem for real sequences,
		\[
		\lim_{m\to\infty}
		\left|
		p_{m,j}^{(\ell)}
		-
		\frac{s^{(\ell)}(P)}{n}
		\right|
		=
		0.
		\]
		Thus
		\[
		\lim_{m\to\infty}
		p_{m,j}^{(\ell)}
		=
		\frac{s^{(\ell)}(P)}{n}.
		\]
		Since $j$ and $\ell$ were arbitrary, the proof is complete.
	\end{proof}

	We conclude the section with two iterative examples. First illustrates the theory for a concave generator satisfying the hypotheses above. The second shows that monotonicity and the boundary condition $\varphi(1)=0$ are not sufficient by themselves without the structural assumptions that ensure the comparison principle. The entropy profiles and the hyperbolic separation from the uniform distribution need not exhibit the same uniforming behavior.
	
	\begin{example}
		Let $n=4$. We consider the generated hyperbolic negator associated with
		the component-wise generator defined on $[0,1]$ by
		\[
		\varphi(t)=1-t^2.
		\]
		The function $\varphi$ is non-increasing and concave, and satisfies
		$\varphi(1)=0$. Hence it belongs to the concave class of generators
		considered above.
		
		For $(P_r)_m=(P_r)_m^{(1)}\eone+(P_r)_m^{(2)}\etwo$, the iteration
		$(P_r)_{m+1}=\Neg_{\varphi}((P_r)_m)$ is computed component-wise. For
		every $j=1,\ldots,4$ and every active idempotent component $\ell$, we have
		\[
		\Neg_{\varphi}((P_r)_m)^{(\ell)}_j
		=
		\frac{1-\bigl((p_r)^{(\ell)}_{m,j}\bigr)^2}
		{\sum_{i=1}^{4}\left(1-\bigl((p_r)^{(\ell)}_{m,i}\bigr)^2\right)}.
		\]
		
		We illustrate the first iterates of this negator from the following
		hyperbolic-valued probability distributions:
		\[
		\begin{array}{r@{\;=\;}c@{\;}c@{\;+\;}c@{\;}c}
			P_1 & (1,0,0,0)              & \eone & (0,1,0,0)               & \etwo,\\[2pt]
			P_2 & (1,0,0,0)              & \eone & (0.75,0.25,0,0)         & \etwo,\\[2pt]
			P_3 & (0.75,0.25,0,0)        & \eone & (0,1,0,0)               & \etwo,\\[2pt]
			P_4 & (0.90,0.10,0,0)        & \eone & (0.50,0.25,0.25,0)      & \etwo,\\[2pt]
			P_5 & (0.50,0.25,0.25,0)     & \eone & (0.90,0.10,0,0)         & \etwo,\\[2pt]
			P_6 & (0.60,0.30,0.10,0)     & \eone & (0.45,0.30,0.20,0.05)   & \etwo.
		\end{array}
		\]
		Figure~\ref{fig:negator-iteration-profiles} shows the corresponding
		iterative profiles. In panels~(a) and~(b), the arrows represent the
		transitions $(P_r)_m\mapsto (P_r)_{m+1}$ in the planes
		$\bigl(H_{\mathbb D}^{(1)},H_{\mathbb D}^{(2)}\bigr)$ and
		$\bigl(G_{\mathbb D}^{(1)},G_{\mathbb D}^{(2)}\bigr)$, respectively.
		Panel~(c) shows the scalar profile of the hyperbolic separation
		$d_{\mathbb D}((P_r)_m,U_4^1)$ as a function of the iteration step $m$.
		
		The behavior shown in Figure~\ref{fig:negator-iteration-profiles} is
		consistent with the entropy-increasing and uniformizing conclusions
		proved above. The entropy profiles move toward the maximal values
		$(\log 4,\log 4)$ and $(3/4,3/4)$. At the same time, the hyperbolic separation from the uniform distribution tends toward zero along the displayed iterations.
		
		\begin{figure}[h]
			\centering
			\includegraphics[width=\textwidth]{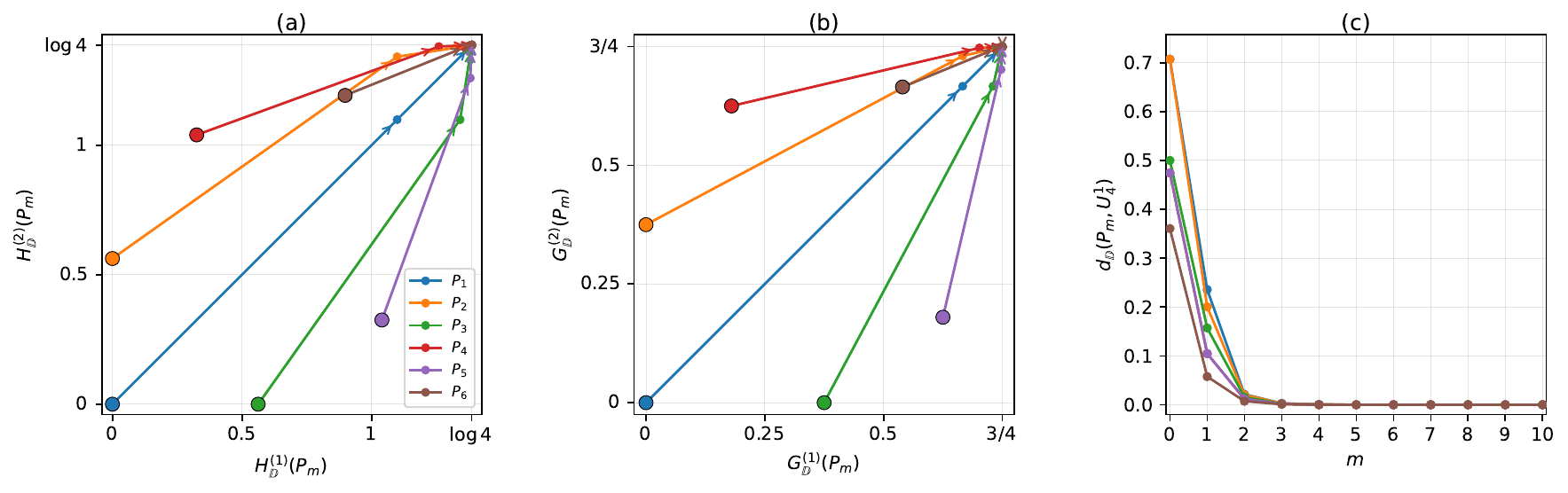}
			\caption{Iterative profiles of the generated hyperbolic negator induced by
				$\varphi(t)=1-t^2$, with $n=4$. Panels (a) and (b) show the component-wise
				evolution of the strong hyperbolic Shannon entropy and the hyperbolic
				Gini--Simpson entropy. The arrows represent the transitions
				$(P_r)_m\mapsto (P_r)_{m+1}$. Panel (c) shows the hyperbolic separation
				$d_{\mathbb D}((P_r)_m,U_4^1)$ as a function of the iteration step.}
			\label{fig:negator-iteration-profiles}
		\end{figure}
		
	\end{example}
	
	\begin{example}
		Let $n=4$. We now consider the generated hyperbolic negator associated with
		the component-wise generator defined on $[0,1]$ by
		\[
		\psi(t)=(1-t)^5.
		\]
		This generator is non-increasing and satisfies $\psi(1)=0$, but it does not
		belong to the concave class considered above. The purpose of this example is
		to illustrate that monotonicity and the boundary condition at $1$ are not
		sufficient by themselves to guarantee the entropy-increasing and
		uniformizing behavior obtained in the previous example.
		
		\begin{figure}[h]
			\centering
			\includegraphics[width=\textwidth]{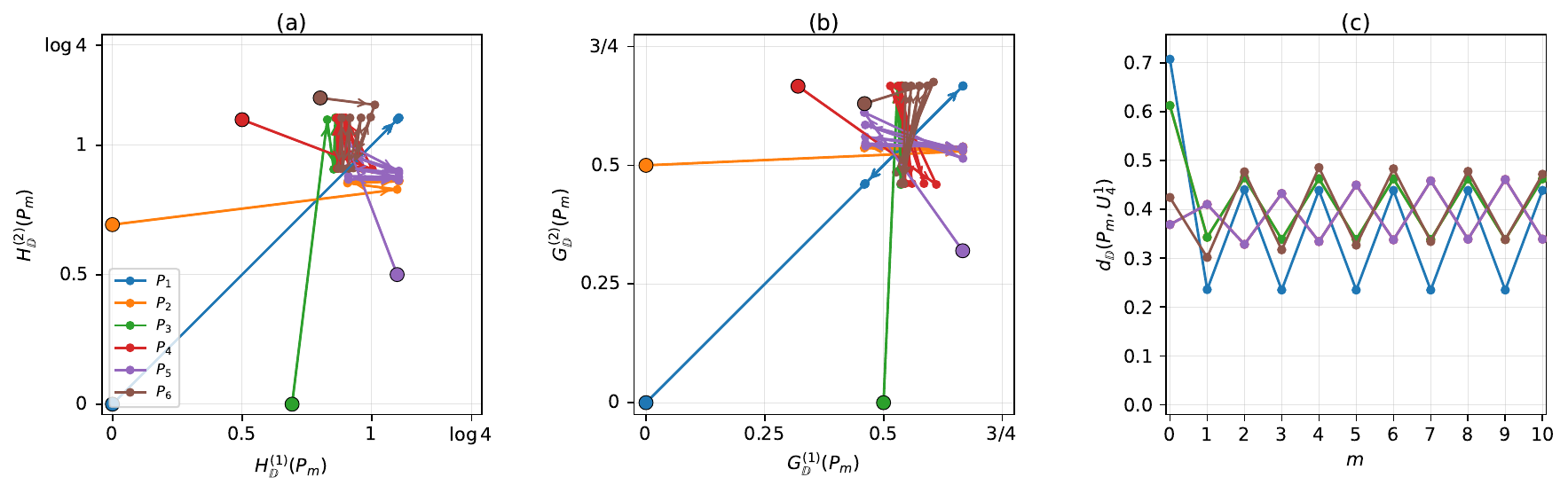}
			\caption{Iterative profiles of the generated hyperbolic negator induced by
				$\psi(t)=(1-t)^5$, with $n=4$. Panels (a) and (b) show the component-wise
				evolution of the strong hyperbolic Shannon entropy and the hyperbolic
				Gini--Simpson entropy. The arrows represent the transitions
				$(P_r)_m\mapsto (P_r)_{m+1}$. Panel (c) shows the hyperbolic separation
				$d_{\mathbb D}((P_r)_m,U_4^1)$ as a function of the iteration step.}
			\label{fig:negator-iteration-nonconcave}
		\end{figure}
		
		We consider the hyperbolic-valued probability distributions:
		\[
		\begin{array}{r@{\;=\;}c@{\;}c@{\;+\;}c@{\;}c}
			P_1 & (1,0,0,0)             & \eone & (0,1,0,0)            & \etwo,\\[2pt]
			P_2 & (1,0,0,0)             & \eone & (0.50,0.50,0,0)      & \etwo,\\[2pt]
			P_3 & (0.50,0.50,0,0)       & \eone & (0,0,1,0)            & \etwo,\\[2pt]
			P_4 & (0.80,0.20,0,0)       & \eone & (0.34,0.33,0.33,0)   & \etwo,\\[2pt]	
			P_5 & (0.34,0.33,0.33,0)    & \eone & (0.80,0.20,0,0)      & \etwo,\\[2pt]
			P_6 & (0.70,0.20,0.10,0)    & \eone & (0.55,0.15,0.15,0.15)& \etwo.
		\end{array}
		\]
		Figure~\ref{fig:negator-iteration-nonconcave} shows the corresponding
		iterative profiles. In panels~(a) and~(b), the arrows represent the
		transitions $(P_r)_m\mapsto (P_r)_{m+1}$ in the entropy planes
		$\bigl(H_{\mathbb D}^{(1)},H_{\mathbb D}^{(2)}\bigr)$ and
		$\bigl(G_{\mathbb D}^{(1)},G_{\mathbb D}^{(2)}\bigr)$. Panel~(c) shows
		the hyperbolic separation $d_{\mathbb D}((P_r)_m,U_4^1)$ as a function of
		the iteration step $m$.
		
		The behavior shown in Figure~\ref{fig:negator-iteration-nonconcave}
		contrasts with the behavior obtained for the concave generator
		$\varphi(t)=1-t^2$. The entropy profiles are not forced toward the maximal
		points $(\log 4,\log 4)$ and $(3/4,3/4)$, and the hyperbolic separation
		does not display the same uniformizing pattern. This illustrates the role
		of the structural hypotheses imposed on the generator in the entropy and
		uniformization results.
	\end{example}

	\section{Conclusions}
	
	This paper introduced a framework for studying negation on finite hyperbolic-valued probability distributions. The construction relies on the idempotent decomposition of hyperbolic numbers and on the partial order induced by their positive cone. Within this setting, we defined hyperbolic majorization as a component-wise criterion for comparing dispersion, and we introduced hyperbolic negators as mass-preserving transformations that reverse the hyperbolic order whenever comparison is available.
	
	The main results identify a class of generated hyperbolic negators for which negation moves distributions toward greater dispersion. More precisely, under suitable structural assumptions on the generator, the original distribution majorizes its negation. This comparison implies monotonicity of the strong hyperbolic Shannon entropy and of the hyperbolic Gini--Simpson entropy. It also gives a quantitative uniformization consequence, since the component-wise deviations from the corresponding uniform distribution decay geometrically.
	
	These results show that the hyperbolic framework preserves the classical intuition that negation should increase uncertainty, but does so in a genuinely ordered and component-wise setting. The partial order is essential here. It determines when order reversal is meaningful, while the idempotent decomposition allows the use of real majorization and Schur-concavity on each active component. Thus, the theory is not simply a formal duplication of real-valued probability negation, but an extension in which mass strata, inactive components, and partial comparability must be taken into account.
	
	We also showed that generated negators are generally not involutive under natural hypotheses, and we presented an involutive construction based on level reversal. This indicates that involutivity is compatible with the hyperbolic framework, although it belongs to a different structural mechanism from the generated negators used to obtain entropy increase and component-wise uniformization.
	
	Several directions remain open and advances are in progress to be communicated elsewhere. In particular, to characterize larger classes of generators for which the majorization comparison holds, and to determine whether the sufficient conditions used here can be weakened. Another direction is to study additional hyperbolic uncertainty measures and identify which of them are monotone under hyperbolic negation. Finally, to extend the theory beyond finite distributions, for instance to measurable hyperbolic-valued probability spaces.
	
	From an applied perspective, hyperbolic-valued probabilities may be useful in models where uncertainty has two coupled components or where one component may become inactive. In such settings, hyperbolic negators could provide complementary transformations for decision-making, information fusion, evidence-based reasoning, and uncertainty modeling. The results of this paper provide a mathematical basis for such developments by ensuring that, under explicit hypotheses, negation behaves as an uncertainty-increasing transformation.

\end{document}